\title[Global Convergence of GROUSE]{Global Convergence of a Grassmannian Gradient Descent \\Algorithm for Subspace Estimation}
\pgfplotsset{compat=newest}
\newlength{\figurewidth}
\newlength{\figureheight}
\newcommand{\R}{\mathbb{R}}
\newcommand{\E}{\mathbb{E}}
\newcommand{\bP}{\mathbb{P}}
\newcommand{\diag}{\operatorname{diag}}
\newcommand{\trace}{\operatorname{\textbf{tr}}}
\newcommand{\nn}{\nonumber}
\newcommand{\argmax}{\operatornamewithlimits{arg\ max}}
\def\ie{{\em i.e.,~}}
\def\eg{{\em e.g.,~}}
\newtheorem{condition}{Condition}
\begin{document}

\maketitle

\begin{abstract}
\sloppypar{It has been observed in a variety of contexts that gradient descent methods have great success in solving low-rank matrix factorization problems, despite the relevant problem formulation being non-convex. We tackle a particular instance of this scenario, where we seek the $d$-dimensional subspace spanned by a streaming data matrix. 
We apply the natural first order incremental gradient descent method, constraining the gradient method to the Grassmannian. In this paper, we propose an adaptive step size scheme that is greedy for the noiseless case, that maximizes the improvement of our metric of convergence at each data index $t$, and yields an expected improvement for the noisy case. We show that, with noise-free data, this method converges from any random initialization to the global minimum of the problem. For noisy data, we provide the expected convergence rate of the proposed algorithm per iteration.}
\end{abstract}

\section{Introduction}
Low-rank matrix factorization is one of the foundational tools of signal processing, numerical methods, and data analysis. Suppose we wish to factorize a matrix $M=UW^T$, imposing orthogonality constraints on $U$ or $W$. Solving for such matrix factorizations can be computationally burdensome, and many algorithms that attempt to speed up computation are actually solving a non-convex optimization problem, therefore coming with few guarantees. 

\sloppypar{The Singular Value Decomposition (SVD) is the solution to a non-convex optimization problem, and there are several highly successful algorithms for solving it \cite{golub2012matrix}. Unfortunately, these algorithms cannot easily be extended to problems with regularizers or missing data. Recently, several results have been published with first-of-their-kind guarantees for a variety of different gradient-type algorithms on non-convex matrix factorization problems \cite{jain2013low, de2014global, armentano2014average,chen2015fast,bhojanapalli2015dropping,zheng2015convergent}. These new algorithms, being gradient-based, are well-suited to extensions of the original problem that include different cost functions or regularizers. For example, with gradient methods to solve the SVD we may be able to solve Robust PCA \cite{candes2011robust, he2012incremental, xu2010robust}, Sparse PCA \cite{d2008optimal}, or even $\ell_1$ PCA \cite{brooks2013pure} with gradient methods as well.}

Our contribution is to provide a global convergence result for $d$-dimensional subspace estimation using an incremental gradient algorithm performed on the Grassmannian, the space of all $d$-dimensional subspaces of $\R^n$. 
Subspace estimation is a special case of matrix factorization with orthogonality constraints, where we seek to estimate only the subspace spanned by the columns of the left matrix factor $U \in \R^{n\times d}$. Our result demonstrates that this gradient algorithm \emph{converges globally} almost surely, \ie it converges from any random initialization to the global minimizer. To the best of our knowledge, this is the first global convergence result for an incremental gradient descent method on the Grassmannian. When there is no noise, we propose a greedy step size scheme that maximizes the improvements on the defined metrics of convergence. Given this, we provide a rate of convergence in two parts: slower convergence in an initial phase starting from the random initialization, and then linear convergence for a local region around the global minimizer, where our results match those in \cite{balzano2014local}. For the noisy case, we propose a step-size regimen that is simply a weighted version of the step size for noise-free data, where the weights depend on the data and noise statistics. With this step size, we provide results guaranteeing monotonic improvements on the metrics of convergence in terms of expectation. 

Incremental gradient descent is our focus, motivated by streaming data applications. There are many applications of subspace estimation and tracking in medical imaging, communications, and environmental science; see more in~\cite{edelman1998geometry, balzano2014local, balzano2012handling}. 
Matrix factors with orthogonality constraints, such as those given by the SVD, are also used in several data applications: they provide a unique collection of low-dimensional projections for data visualization, capture directions of maximal variance so as to give useful insights into data structure, and allow compressed storage of massive datasets with a precise notion of loss in compression.

\section{Formulation and Related Work}

We may formulate subspace estimation as a non-convex optimization problem as follows. Let $M \in \R^{n \times N}$ be a matrix that we wish to approximate with a subspace of rank $d$, and solve:
\vspace{-5mm}

\begin{align}
\underset{U \in \R^{n\times d}, W \in \R^{N \times d}}{\text{minimize}} \quad &  \|UW^T-M\|_F^2 \label{prob:batch} \\
\text{subject to } \quad & \text{span}\left(U\right)\in  \mathcal{G}(n,d)\nn
\end{align} 
This problem is non-convex firstly because of the product of the two optimization variables $U$ and $W$ and secondly because the optimization is over the Grassmannian $\mathcal{G}(n,d)$, the non-convex set of all $d$-dimensional subspaces in $\mathbb{R}^{n}$. However, several methods\footnote{For example, the power method can solve this problem if the top $d$ singular values of $M$ are distinct~\cite{golub2012matrix}. Specifically, considering $d=1$, if the desired accuracy of the $U$ output by the power method to the global minimizer is $\epsilon^*$, and the first two singular values of $M$, $\sigma_1(M)$ and $\sigma_2(M)$ are distinct with the $\sigma_1(M) = c \sigma_2(M)$ for $c>1$, then the power method converges in $O\left(\frac{\log(1/\epsilon^*)}{\log c}\right)$ iterations.} can find the global minimizer of this problem in polynomial time under a variety of assumptions on $M$. 

In this paper, we are interested in approximating a streaming data matrix. 
At each step, we sample a column of $M$, denoted $x_t \in \R^n$. We consider the planted problem, where $x_t = v_t + \xi_t$ where $\xi_t$ is noise and $v_t$ is drawn from a continuous distribution with support on the true subspace, spanned by $\bar{U} \in \R^{n\times d}$ with orthonormal columns; $v_t = \bar{U}s_t$, $s_t \in \R^d$. When $\xi_t=0$, we wish to find the $U$ that minimizes 
\begin{equation}
\label{eq:cost} 
F(U) = \sum_{t=1}^\infty \min_{w_t} \|U w_t - x_t \|_2^2\;,
\end{equation} \ie the span of the data vectors or the range of $\bar U$, denoted $R(\bar U)$. When $\xi_t\neq 0$ we still discuss results in terms of the distance from $\bar U$.
If we consider only $t=1,\dots,N$, Problem (\ref{eq:cost}) is identical to Problem~\eqref{prob:batch}. 
The GROUSE algorithm (Grassmannian Rank-One Update Subspace Estimation) we analyze is shown as Algorithm \ref{alg:grouse}, where we generate a sequence $\{U_t\}_{t = 0, 1, \dots}$ of $n \times d$ matrices with orthonormal columns with the goal that $R(U_t)\rightarrow R(\bar{U})$ as $t \rightarrow \infty$. Each observed vector is used to update $U_t$ to $U_{t + 1}$, and we constrain the gradient descent method to the Grassmannian using a geodesic update~\cite{edelman1998geometry}. 

Because of the importance of the problem, it has been studied for decades, and there is a great deal of related work. We direct the reader to \cite{edelman1998geometry, balzano2012handling} for in-depth descriptions of algorithms and guarantees. We focus here on recent results that have \emph{global convergence guarantees to the global minimizer} and study either gradient-type algorithms, algorithms that handle streaming data, or algorithms that maintain orthogonality constraints with manifold optimization.

First we discuss incremental methods. \cite{de2014global} established the global convergence of a stochastic gradient descent method for the recovery of a positive definite matrix $M$ in the undersampled case, where the matrix $M$ is not measured directly but instead via linear measurements. They propose a step size scheme under which they prove global convergence results from a randomly generated initialization. Similarly, \cite{balsubramani2013fast} invokes a martingale-based argument to show the global convergence rate of the proposed incremental PCA method to the single top eigenvector in the fully sampled case.  In contrast, \cite{arora2013stochastic} estimates the best $d$-dimensional subspace in the fully sampled case and provides a global convergence result by relaxing the non-convex problem to a convex one. We seek to identify the $d$ dimensional subspace by solving the non-convex problem directly. Finally, our work is most related to~\cite{balzano2014local}, which provides local convergence guarantees for GROUSE in both the fully sampled and undersampled case. Our work focuses on global convergence but only in the fully sampled case; we will extend the global convergence results to the undersampled case in future work.

Turning to batch methods, \cite{RH2012, jain2013low} provided the first theoretical guarantee for an alternating minimization algorithm for low-rank matrix recovery in the undersampled case. 
Under typical assumptions required for the matrix recovery problems \cite{recht2010guaranteed}, 
they established geometric convergence to the global optimal solution. Earlier work \cite{keshavan2010matrix, ngo2012scaled} considered the same undersampled problem formulation and established convergence guarantees for a steepest descent method (and a preconditioned version) on the full gradient, performed on the Grassmannian. {\cite{chen2015fast,bhojanapalli2015dropping,zheng2015convergent} considered low rank semidefinite matrix estimation problems, where they reparamterized the underlying matrix as $M = UU^T$, and update $U$ via a first order gradient descent method. However, all these results require batch processing and a decent initialization that is close enough to the optimal point, resulting in a heavy computational burden and precluding problems with streaming data.}
We study random initialization, and our algorithm has fast, computationally efficient updates that can be performed in an online context. 

Lastly, several convergence results for optimization on general Riemannian manifolds, including several special cases for the Grassmannian, can be found in \cite{absil2009optimization}. Most of the results are very general; they include global convergence rates to local optima for steepest descent, conjugate gradient, and trust region methods, to name a few. We instead focus on solving the problem in \eqref{eq:cost} and provide global convergence rates to the global minimum.

\section{Convergence analysis}
\label{sec:fullconvg}

We analyze Algorithm~\ref{alg:grouse}. At each step, the algorithm receives a vector $x_t  = v_t + \xi_t \in \R^n$ such that $v_t = \bar{U}s_t$, $s_t \in \R^d$ and $\xi_t$ is zero mean Gaussian noise. The algorithm then outputs an $n \times d$ matrix $U_t$ with orthonormal columns at each iteration. We wish to recover $\bar U$, \ie the minimizer of Equation~\eqref{eq:cost} when there is no noise. 
%
%
%
We would like to emphasize that in this scenario in a real application one would use the ISVD or a Gram-Schmidt procedure, but we seek convergence results for the Grassmannian gradient descent algorithm so that extensions can be made; \eg we may regularize the cost function or we may minimize some other function of the data.  Reliable global convergence of the GROUSE algorithm 
has been observed empirically, despite the fact that the algorithm is solving a non-convex problem and operating on a non-convex manifold.

Algorithm~\ref{alg:grouse} takes each vector $x_t$, forms the gradient of $\min_w \|U w - x_t \|_2^2$, and takes a step in the direction of the negative gradient. The step is taken along the Grassmannian, the manifold of all $d$-dimensional subspaces of $\R^n$, and according to the step size described and justified below. In words, the algorithm works as follows: First we project our data vector onto the current subspace iterate to get the projection $p_t$. Then we calculate the residual $r_t$. The update to our subspace estimate $U_t$ then requires only the addition of a rank-one matrix, as can be seen in Equation~(\ref{eq:gpupdate}). This update is derived and explained in further detail in~\cite{balzano2010online, edelman1998geometry}. The rank-one update tilts $U_t$ to no longer contain $p_t$ but instead contain a linear combination of $p_t$ and $r_t$; in other words, it moves $U_t$ towards the observation $v_t$. 

\begin{algorithm}
\caption{GROUSE: Grassmannian Rank-One Update Subspace Estimation} \label{alg:grouse}
\begin{algorithmic}
\STATE{Given $U_0$, an $n \times d$ matrix with orthonormal columns, with $0<d<n$;}
\STATE{Set $t:=0$;}
\REPEAT 
\STATE{Given observation $x_t = v_t + \xi_t$ for $v_t \in R(\bar U)$;}
\STATE{Define $w_t := \arg \min_w \|U_t w - x_t\|_2^2$;}
\STATE{Define $p_t := U_t w_t$; 
$r_t := x_t - U_t w_t$; }
\STATE{Using step size

\vspace{-3mm}
\begin{equation}
\theta_t =  \arctan\left((1-\alpha_t)\frac{\| r_t\|}{\|p_t\|}\right)\;,
\label{eq:theta}
\end{equation} 
where $\alpha_t = c \frac{\sigma^2}{1 + \sigma^2}\left(1 - \frac{d}{n}\right)\frac{\|x_t\|^2}{\|r_t\|^2}$ where $c > 0$ and $\sigma^2$ denotes the upper bound for the noise level (Condition \ref{cond:fullnoise}), update with a gradient step on the Grassmannian:}
\begin{equation} \label{eq:gpupdate}
U_{t+1} := U_t + \left(\frac{y_t}{\|y_t\|} - \frac{p_t}{\|p_t\|}\right)\frac{w_t^T}{\|w_t\|}
\end{equation} 

\vspace{-2mm}
\noindent where 
\vspace{-2mm}
$$\frac{y_t}{\|y_t\|} = \left[ \cos (\theta_t) \frac{p_t}{\|p_t\|} + \sin(\theta_t) \frac{r_t}{\|r_t\|} \right] $$

\vspace{-2mm}
\STATE{$t:=t+1$;}
\UNTIL{termination}
\end{algorithmic}
\end{algorithm}

Before we present our main results on the convergence of the GROUSE algorithm, we first call out the following definitions and condition that will be used throughout our analysis.  
\begin{definition}[Principal Angles]
We use $\phi_i\left(\bar{U}, U_t\right), i = 1, \dots, d$ to denote the principal angles between subspaces $\text{R}(U_t)$ and $\text{R}(\bar U)$, which are defined [\cite{stewart1990matrix}, Chapter 5] by  $\cos \phi_i(\bar{U},U_t) = \sigma_{i}(\bar U^T U_t)$. 
\label{defn:angles}
\end{definition}

\begin{definition}[Determinant similarity]
Our first metric is $\zeta_t \in [0,1]$, which measures the similarity between two subspaces and is defined as
\begin{equation}
\zeta_t := \det(\bar U^T U_t U_t^T \bar U)  = \prod_{i=1}^d \cos^2 \phi_i(\bar{U}, U_t)\;.
\label{zetadefn}
\end{equation}
\label{defn:detdiscrepancy}
\end{definition}

\vspace{-5mm}

\begin{definition}[Frobenius norm discrepancy]
Our second metric is $\epsilon_t \in [0,d]$, which measures the discrepancy between $R(U_t)$ and $R(\bar U)$, and is defined as
	\begin{equation}
		\epsilon_t := \sum_{i = 1}^{d}\sin^2{\phi_i(\bar U, U_t)} =  d - \|\bar U^T U_t\|_F^2  \;.
		\label{epsilondefn}
	\end{equation}
    \label{defn:epsdiscrepancy}
\end{definition}

\vspace{-5mm}
\begin{condition}
	The inputs of GROUSE are $x_t = v_t + \xi_t$ where $v_t = \bar U s_t$ with $\mathbb{E}s_t = 0, \text{Cov}(s_t) = \mathbb{I}_{d}$, and $\xi_t$ is a Gaussian random vector with entries being independently normal random variables such that $\mathbb{E}\left[\|\xi_t\|^2 / \|v_t\|^2 \big\lvert v_t\right] \leq \sigma^2$. Further, we assume the energy of the underlying signals are finite, \ie $\|v_t\|^2 < \infty$.    
	\label{cond:fullnoise}
\end{condition}

\subsection{Optimal Adaptive Step Size} 
\label{sec:step_size}
In this section, we first derive a greedy step size scheme for each iteration $t$ that maximizes the improvement on the defined metrics ($\epsilon_t,\zeta_t$) of convergence for the noiseless case, \ie $x_t = v_t$. Let $v_{t,\parallel}$ and $v_{t, \perp}$ denote the projection and residual of $v_t$ onto $R(U_t)$. Then after each update we have the following (Appendix \ref{sec:proof_of_supporting_theory}):
\begin{subequations}
\begin{align}
	&\frac{\zeta_{t + 1}}{\zeta_t} = \left(\cos\theta_t + \frac{\|v_{t, \perp}\|}{\|v_{t,\parallel}\|}\sin\theta_t\right)^2 \label{eq0:eps_diff}\\
	&\epsilon_t - \epsilon_{t + 1} = \frac{\left\|\bar U^T y_t\right\|^2}{\|y_t\|^2} - \frac{\|\bar U^T v_{t, \parallel}\|^2}{\|v_{t, \parallel}\|^2} \label{eq0:zeta_ratio}
\end{align}	
\end{subequations}
with $\frac{y_t}{\|y_t\|} = \frac{v_{t,\parallel}}{\|v_{t,\parallel}\|}\cos\theta_t + \frac{v_{t, \perp}}{\|v_{t, \perp}\|}\sin\theta_t$. It follows that 
\begin{equation}
	\theta^{\ast}_t = \argmax_{\theta} \frac{\zeta_{t + 1}}{\zeta_t} = \arctan \left(\frac{\|v_{t,\perp}\|}{\|v_{t,\parallel}\|}\right) \nonumber
	\label{eq:stepsize_noiseless}
\end{equation}
This is equivalent to (\ref{eq:theta}) for the noise-free case setting $\alpha_t = 0$. Using $\theta_t^{\ast}$, we obtain monotonic improvement on the determinant increment $\frac{\zeta_{t + 1}}{\zeta_t} = 1 + \frac{\|v_{t, \perp}\|^2}{\|v_t\|^2} \geq 1$.  For the Frobenius norm discrepancy, we obtain $\epsilon_{t + 1} - \epsilon_t = 1 - \frac{\left\|\bar U^T v_{t,\parallel}\right\|^2}{\|v_{t,\parallel}\|^2}$; that is, $\epsilon_t$ also achieves its maximal improvement. Therefore, when there is no noise in the observations, the proposed step size scheme described by (\ref{eq:theta}) implies greedy learning rates with respect to the defined metrics ($\epsilon_t, \zeta_t$) of convergence.

For the noisy case, we propose a weighted step size schedule by restricting $\alpha_t \in (0,1]$ with the goal that $\alpha_t \rightarrow 1$ as our estimated subspace $R(U_t)$ gradually converges to the true subspace $R(\bar U)$. The intuition behind this strategy is that, choosing the step size in Equation (\ref{eq:theta}), the update of GROUSE follows as
\begin{equation}
	U_{t + 1} =  U_t + \left(\frac{p_t + (1 - \alpha_t) r_t}{\|p_t + (1 - \alpha_t) r_t\|} - \frac{p_t}{\|p_t\|}\right)\frac{w_t^T}{\|w_t\|}  \nonumber
	\label{eq:adapt_update}
\end{equation}
for which we have, if the noise is Gaussian distributed, $\|r_t\|^2 \sim \|v_{t,\perp}\|^2 + (1 - d/n)\|\xi_t\|^2$ (where by $a \sim b$ we mean $a$ concentrates around $b$), hence the noise part will gradually dominate the projection residual as $R(U_t)\rightarrow R(\bar U)$. It is therefore natural for us to consider incorporating less and less of the residual information into $R(U_t)$ over time. Therefore, we propose the following schedule for $\alpha$: 
\begin{equation}
	\alpha_t = 1 - \frac{\|v_{t,\perp}\|^2}{\|r_t\|^2} = \frac{c\sigma^2}{1 + \sigma^2}\left(1 - \frac{d}{n}\right)\frac{\|x_t\|^2}{\|r_t\|^2}  
	\label{eq:alpha_size}
\end{equation}
where $c > 0$. As we will show in Section \ref{sec:supportingresults}, with this weighted learning rate scheme, we obtain improvements in expectation on both $\zeta_t$ and $\epsilon_t$. 


\subsection{Convergence Without Noise} 
\label{sec:conv_noiseless}
In this section, we consider the noise-free case, that is $x_t = v_t$ and $v_t \in R(\bar U)$. The step size (Eq (\ref{eq:theta})) used in this section has $\alpha_t = 0$ for all iterations. We provide analysis of the algorithm in two separate phases. In the first phase the GROUSE algorithm will converge to a local region of the global optimal point from a random initialization within $O(d^3 log(n))$ iterations. From there, in the second phase GROUSE converges linearly to the optimal point. In each phase we use a different metric of convergence, which helps us obtain an overall faster convergence rate as compared to other work. The convergence rate with respect to only either determinant \cite{de2014global} or Frobenius norm discrepancy \cite{jain2013low} is either much slower within the local region \cite{de2014global} or slower in an initial phase from random initialization \cite{jain2013low}. This is demonstrated numerically in Figure \ref{fig:deteps_rate}.

\begin{theorem}[Global Convergence of GROUSE]
Suppose Condition \ref{cond:fullnoise} and that no noise is contained in the observations, \ie $x_t = v_t$. Let $\epsilon^*>0$ be the desired accuracy of our estimated subspace using the metric 
 in Definition~\ref{defn:epsdiscrepancy}. Initialize the starting point $U_0$ of GROUSE as the orthonormalization of an $n\times d$ matrix with entries being standard normal variables. Then for any $\rho, \rho'>0$, after 
\begin{align}
K &\geq K_1 + K_2 \nn \\
&= \left(\frac{d^3}{\rho'} + d\right)\mu_0 \log (n) + 2 d \log \left( \frac{1}{\epsilon^* \rho}\right) 
\label{eq:Kmain}
\end{align} iterations of GROUSE (Algorithm~\ref{alg:grouse}), 
    \begin{align}
 \bP\left(\epsilon_K \leq \epsilon^*\right) \geq 1 - \rho'-\rho\;.
    \end{align}
    where $\mu_0 = 1 + \frac{\log \frac{(1 - \rho')}{C} + d\log (e/d)}{d\log n}$ with $C > 0$ a constant approximately equal to $1$. 
	\label{thm:global}
\end{theorem}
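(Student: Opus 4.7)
The plan is a two-phase analysis matching the decomposition $K = K_1 + K_2$ in the theorem statement. In Phase 1, the natural Lyapunov function is the determinant similarity $\zeta_t$, because the greedy noise-free update gives $\zeta_{t+1}/\zeta_t = 1 + \|v_{t,\perp}\|^2/\|v_t\|^2 \geq 1$, so $\zeta_t$ improves monotonically at every step even from a pathological random initialization. The Frobenius discrepancy $\epsilon_t$, by contrast, can stall near $d$ for many iterations when $U_t$ is nearly orthogonal to $\bar U$. In Phase 2, once we have entered a local neighborhood of $\bar U$, I would switch to $\epsilon_t$ as the Lyapunov function to obtain geometric contraction matching the local rate in \cite{balzano2014local}.

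\textbf{Initialization.} The first ingredient is a high-probability lower bound on $\zeta_0$. Since $U_0$ is the orthonormalization of an $n \times d$ standard Gaussian matrix, the squared cosines of the principal angles between $R(U_0)$ and $R(\bar U)$ follow a Jacobi-type distribution with a known joint density. Writing $\zeta_0 = \prod_{i=1}^d \cos^2 \phi_i(\bar U, U_0)$, I would apply a truncation and union bound to each factor to obtain $\log \zeta_0 \geq -d \mu_0 \log n$ with probability at least $1 - \rho'$. The constant $\mu_0$ stated in the theorem is governed by the behavior of the smallest principal cosine near zero, whose density scales like $(n - d)$.

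\textbf{Phase 1 drift.} Next I would quantify the expected multiplicative improvement of $\zeta_t$. Starting from $\log \zeta_{t+1} - \log \zeta_t = \log(1 + \|v_{t,\perp}\|^2/\|v_t\|^2)$ and applying $\log(1+x) \geq x/2$ for $x \in [0,1]$ (with the complementary case $x > 1$ treated as even more favorable), I would take conditional expectation over $v_t = \bar U s_t$ with $\mathbb{E}[s_t s_t^T] = I_d$. A key identity of the form $\mathbb{E}\big[\|v_{t,\perp}\|^2 / \|v_t\|^2 \,\big|\, U_t\big] \geq c\,\epsilon_t/d$ follows by reducing the expectation to the trace of the orthogonal projector $I - U_t U_t^T$ restricted to $R(\bar U)$. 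Summing the drift inequality $\mathbb{E}[\log \zeta_{t+1} - \log \zeta_t \mid U_t] \geq c\,\epsilon_t/d$ across iterations, together with the upper bound $\log \zeta_t \leq 0$ and the initialization bound on $\log \zeta_0$, a stopping-time/Markov argument would force the trajectory to reach $\epsilon_t \leq 1$ within $K_1 = (d^3/\rho' + d)\mu_0 \log n$ iterations with the required probability.

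\textbf{Phase 2 and union bound.} Once in the local region $\epsilon_t \leq 1$, I would invoke the explicit expression for $\epsilon_t - \epsilon_{t+1}$ under the greedy step size and take expectation over $s_t$ to obtain a per-step contraction of the form $\mathbb{E}[\epsilon_{t+1} \mid U_t] \leq (1 - 1/d)\epsilon_t$. Iterating over $K_2 = 2d \log(1/(\epsilon^\ast \rho))$ steps gives $\mathbb{E}[\epsilon_{K_1+K_2}] \leq \epsilon^\ast \rho$, which Markov's inequality upgrades to $\bP(\epsilon_K \leq \epsilon^\ast) \geq 1 - \rho$; a union bound over the Phase-1 and Phase-2 failure events then delivers the claimed probability $1 - \rho - \rho'$. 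The main technical obstacle is Phase 1: establishing a drift lower bound that is uniform in $U_t$ and robust to the exponentially small value of $\zeta_0$ produced by random initialization, and then cleanly stitching a deterministic handoff at $\epsilon_t \approx 1$ to the Phase-2 contraction. This is precisely why $\log \zeta_t$ rather than $\epsilon_t$ serves as the Phase-1 potential function: its multiplicative dynamics remain controllable even when $U_t$ is nearly orthogonal to $\bar U$, a regime in which additive metrics stall.
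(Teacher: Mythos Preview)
Your two-phase architecture, with $\zeta_t$ as the Phase~1 potential and $\epsilon_t$ as the Phase~2 potential, matches the paper exactly. However, several details diverge from the paper's argument, and one of them is a genuine gap.

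\textbf{Initialization and Phase~1.} The paper does \emph{not} prove a high-probability lower bound on $\zeta_0$. Instead it uses the closed-form expectation $\E[\zeta_0] = C(d/(ne))^d$ (Lemma~\ref{lem:exp_zeta0}) and argues entirely in expectation: from $\zeta_{t+1}/\zeta_t \geq 1 + \|v_\perp\|^2/\|v\|^2$ and the inequality $\E[\|v_\perp\|^2/\|v\|^2 \mid U_t] \geq (1-\zeta_t)/d$ (Lemma~\ref{lem:vconc}, using $1-\zeta_t \leq \epsilon_t$), the paper obtains $\E[\zeta_{t+1}] \geq (1+\rho'/d^2)\,\E[\zeta_t]$ whenever $\zeta_t \leq 1-\rho'/d$. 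Iterating this multiplicative growth for $K_1$ steps pushes $\E[\zeta_{K_1}]$ up to $1-\rho'/2$, and Markov's inequality applied to $1-\zeta_{K_1}$ gives $\bP(\zeta_{K_1}\geq 1/2)\geq 1-\rho'$. This is where the $d^3/\rho'$ in $K_1$ comes from: the per-step factor is $1+\rho'/d^2$, and $\log(1/\E[\zeta_0]) = \mu_0 d\log n$. Your route through $\log\zeta_t$, a high-probability bound on $\zeta_0$ via the Jacobi ensemble, and a stopping-time argument is plausible but more intricate, and it is not clear it reproduces the stated constants $K_1$ and $\mu_0$.

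\textbf{The gap: your handoff criterion does not feed Phase~2.} You hand off at $\epsilon_t \leq 1$ and then assert a contraction $\E[\epsilon_{t+1}\mid U_t] \leq (1-1/d)\epsilon_t$. This does not follow. The actual per-step bound is
\[
\E[\epsilon_{t+1}\mid U_t] \leq \Bigl(1 - \tfrac{\cos^2\phi_{t,d}}{d}\Bigr)\epsilon_t,
\]
and $\epsilon_t \leq 1$ gives no control on $\cos^2\phi_{t,d}$: with $d\geq 2$ one can have $\phi_{t,d}=\pi/2$ and $\epsilon_t=1$, so the contraction factor is $1$. The paper's handoff is instead at $\zeta_t \geq 1/2$, which immediately yields $\cos^2\phi_{t,d}\geq \zeta_t\geq 1/2$ (since $\zeta_t=\prod_i\cos^2\phi_{t,i}\leq \cos^2\phi_{t,d}$) and hence a uniform rate $1-1/(2d)$; moreover, monotonicity of $\zeta_t$ guarantees the bound persists for all subsequent iterations. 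Switching your Phase~1 target from $\epsilon_t\leq 1$ to $\zeta_t\geq 1/2$ repairs this, and in fact aligns your argument with the paper's.
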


The proof of Theorem \ref{thm:global} is a direct combination of our analysis in two phases of the algorithm, stated in Theorem \ref{thm:detconvgrate} and Theorem \ref{thm:localepsconvg} below. 

\begin{theorem}[Initial convergence of the determinant similarity $\zeta_t$ to $\frac{1}{2}$] Under the conditions of Theorem \ref{thm:global}, for any $\rho'\in (0,1)$, after 
    \begin{equation}
    	K_1 \geq \left( \frac{d^3}{\rho'} + d\right)\mu_0 \log(n) \nn 
    \end{equation}
    iterations of GROUSE (Algorithm \ref{alg:grouse}), 
    \begin{equation}
    	\mathbb{P}\left(\zeta_{K_1} \geq \frac{1}{2}\right) \geq 1 - \rho' \; \nn
    \end{equation}
    where $\mu_0$ is the same as that in Theorem \ref{thm:global}.
\label{thm:detconvgrate}
\end{theorem}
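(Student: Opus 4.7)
The plan is to treat $\log\zeta_t$ as a stochastic process with positive drift whenever $\zeta_t<1/2$, and to combine a high-probability lower bound on the initial value $\zeta_0$ with an optional-stopping hitting-time argument for $\tau := \min\{t: \zeta_t \ge 1/2\}$.

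First I would make the per-iteration drift explicit. In the noise-free case, the greedy step in (\ref{eq:theta}) with $\alpha_t=0$ combined with (\ref{eq0:eps_diff}) gives $\zeta_{t+1}/\zeta_t = \|v_t\|^2/\|v_{t,\parallel}\|^2$, so $\log\zeta_{t+1}-\log\zeta_t = -\log(1-\|v_{t,\perp}\|^2/\|v_t\|^2) \ge \|v_{t,\perp}\|^2/\|v_t\|^2$. Conditioning on $U_t$, I would diagonalize $\bar U^T U_t = Y\Sigma Z^T$ with $\Sigma=\operatorname{diag}(\sigma_1,\dots,\sigma_d)$, set $\tilde s_t = Y^T s_t$, and rewrite the ratio as $\sum_i (1-\sigma_i^2)\tilde s_{t,i}^2 / \sum_j \tilde s_{t,j}^2$. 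Under Condition~\ref{cond:fullnoise} together with rotational symmetry of $s_t$ (or, failing that, a direct concentration of $\|v_t\|^2$ around $d$), each $\mathbb{E}[\tilde s_{t,i}^2/\|\tilde s_t\|^2]$ equals $1/d$, yielding a drift bound of the form $\mathbb{E}[\log\zeta_{t+1}-\log\zeta_t \mid U_t] \gtrsim \epsilon_t/d$. Combining this with the elementary inequality $\epsilon_t \ge 1-\zeta_t$ (which follows from $1-\prod_i x_i \le \sum_i(1-x_i)$ applied to $x_i = \sigma_i^2$), whenever $\zeta_t<1/2$ the drift is of order $1/d$.

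Next I would promote this into a supermartingale. The process $-\log\zeta_t + C\,t/d$, stopped at $\tau\wedge K$, is a supermartingale for an appropriate absolute constant $C$, so optional stopping together with the nonnegativity of $-\log\zeta_{\tau\wedge K}$ yields $\mathbb{E}[\tau\wedge K] = O(d\log(1/\zeta_0))$, and Markov's inequality then gives $\mathbb{P}(\tau>K) \le O(d\log(1/\zeta_0)/K)$. To close the argument I need a high-probability upper bound on $\log(1/\zeta_0)$. Because $U_0$ is the orthonormalization of an $n\times d$ i.i.d.\ standard Gaussian matrix $G$, $U_0$ is Haar-uniform on the Stiefel manifold and, setting $H := \bar U^T G$, one has $\zeta_0 = (\det H)^2/\det(G^T G)$ with $H\in\R^{d\times d}$ itself standard Gaussian. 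Bartlett's decomposition expresses $\det(G^T G)$ and $\det(H^T H)$ in distribution as products of independent chi-squares $\chi^2_{n-j+1}$ and $\chi^2_{d-j+1}$, respectively. Standard concentration of $\log\chi^2_{n-j+1}$ about $\log(n-j+1)\approx\log n$, together with a careful left-tail control of the small-degree factors (especially $\chi^2_1$, whose density is of order $1/\sqrt{x}$ near $0$), produces an upper bound of the form $\log(1/\zeta_0) \le d\,\mu_0\log n$ with probability at least $1-\rho''$, for $\mu_0$ matching the expression in the theorem statement.

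A union bound between this initialization control (failure $\rho''$) and the Markov step (failure $O(d\log(1/\zeta_0)/K_1)$), with $\rho''$ chosen as a suitable multiple of $\rho'$, then delivers the stated iteration count $K_1 \ge (d^3/\rho' + d)\,\mu_0\log n$. The principal obstacle is the initialization step: the heavy left tail of $\chi^2_1$ forces careful allocation of the failure budget across the $d$ small-degree chi-square factors in order to keep $\mu_0$ close to $1$ while absorbing only logarithmic-in-$1/\rho'$ terms. A secondary subtlety arises in the drift step: Condition~\ref{cond:fullnoise} assumes only that $\mathbb{E}s_t=0$ and $\operatorname{Cov}(s_t)=I_d$, so the clean identity $\mathbb{E}[\tilde s_{t,i}^2/\|\tilde s_t\|^2]=1/d$ is transparent only in the isotropic (e.g.\ Gaussian) case and otherwise requires an auxiliary concentration or moment argument on $\|v_t\|^2$ and $\|v_{t,\perp}\|^2$ to justify the same per-iteration lower bound.
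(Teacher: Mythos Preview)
Your approach is sound but takes a different route from the paper. The paper works directly with $\zeta_t$ rather than $\log\zeta_t$: from Lemma~\ref{lem:mono} and Lemma~\ref{lem:vconc} it obtains $\mathbb{E}[\zeta_{t+1}\mid U_t] \geq \bigl(1+(1-\zeta_t)/d\bigr)\,\zeta_t$, thresholds at $1-\rho'/d$ (not $1/2$) to get a uniform multiplicative factor $1+\rho'/d^2$, iterates to $\mathbb{E}[\zeta_{K_1}] \geq 1-\rho'/2$, and finishes with a single application of Markov's inequality to the nonnegative variable $1-\zeta_{K_1}$. For the initialization the paper needs only the \emph{first moment} $\mathbb{E}[\zeta_0]=C(d/(ne))^d$, quoted as Lemma~\ref{lem:exp_zeta0}; your Bartlett decomposition and left-tail control of the small-degree chi-square factors are forced on you precisely because your hitting-time argument requires a high-probability lower bound on $\zeta_0$ itself, whereas the paper's route sidesteps this entirely. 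On the other hand, your supermartingale/optional-stopping scheme cleanly decouples initialization randomness from iteration randomness, and the bound $\mathbb{E}[\tau\mid\zeta_0]=O\bigl(d\log(1/\zeta_0)\bigr)$ combined with Markov actually delivers a leading term of order $d^2\mu_0\log(n)/\rho'$, one power of $d$ better than the stated $d^3$. Regarding your secondary subtlety: the paper applies Lemma~\ref{lem:expisovec} to $s_t$ itself with $Q=\bar Y(I-\Gamma^2)\bar Y^T$ (see the proof of Lemma~\ref{lem:vconc}), so the rotation is absorbed into $Q$ and no rotational invariance of $s_t$ is needed---only that its entries are identically distributed, zero-mean and uncorrelated.
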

Analyzing the determinant similarity turns out to be the key to proving convergence in this initial phase of GROUSE. The determinant similarity increases quickly toward 1 in the first phase. This also gives insight into how the GROUSE algorithm manages to seek the global minimum of a non-convex problem formulation: GROUSE is not attracted to stationary points that are not the global minimum. For our problem, all other stationary points $U_{stat}$ have $\det(\bar U^T U_{stat} U_{stat}^T \bar U) = 0$, because they have at least one direction orthogonal to $\bar U$~\cite{balzano2012handling}. If the initial point $U_0$ has determinant similarity with $\bar{U}$ strictly greater than zero, and GROUSE increases the determinant similarity monotonically (as we mentioned in Section \ref{sec:step_size} and prove in Section \ref{sec:supportingresults}), then we are guaranteed to stay away from other stationary points. Since we initialize GROUSE using $U_0$ uniformly from the Grassmannian, as the orthonormal basis of a random matrix $V\in R^{n\times d}$ with entries being independent standard Gaussian random variables, we guarantee $\zeta_0 > 0$ with probability one. 

\begin{theorem}[Local convergence of the Frobenius norm discrepancy $\epsilon_t$ to 0]
Suppose at iteration $k$ we have $\zeta_k \geq 1/2$. Then for any $\rho \in (0,1)$ and given accuracy $\epsilon^{\ast}$, after $$K_2 \geq 2 d \log \left( \frac{1}{\epsilon^* \rho}\right)$$ additional iterations of GROUSE Algorithm~\ref{alg:grouse}, we have 
\begin{equation}
\bP( \epsilon_{k+K_2} \leq \epsilon^*) \geq 1-\rho\;. \nn 
\end{equation}
\label{thm:localepsconvg}
\end{theorem}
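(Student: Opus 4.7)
}
The plan is to show that $\epsilon_t$ contracts geometrically in expectation once $\zeta_t \geq 1/2$, and then apply Markov's inequality with a good bound on the starting value $\epsilon_k$. First I would exploit the fact (noted in Section~\ref{sec:step_size} for the noise-free case) that the greedy step size makes $\zeta_t$ nondecreasing, so $\zeta_t \geq 1/2$ persists for all $t \geq k$. Because $\zeta_t = \prod_i \cos^2\phi_i \geq 1/2$ and each $\cos^2\phi_i \leq 1$, each individual factor satisfies $\cos^2\phi_i \geq 1/2$ (equivalently $\sin^2\phi_i \leq 1/2$) throughout this second phase. Applying AM--GM to the $\cos^2\phi_i$'s,
\begin{equation}
\epsilon_t \;=\; d - \sum_{i=1}^d \cos^2\phi_i \;\leq\; d\bigl(1 - \zeta_t^{1/d}\bigr) \;\leq\; d\bigl(1-2^{-1/d}\bigr) \;\leq\; \log 2 \;<\; 1,\nn
\end{equation}
which is the key reason the bound $K_2 \geq 2d\log\bigl(1/(\epsilon^*\rho)\bigr)$ is free of any $\log d$ term.

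Next I would compute the per-iteration decrease. With $\alpha_t=0$ the update direction $y_t/\|y_t\|$ aligns with $v_t/\|v_t\|$, so $\|\bar U^T y_t\|^2/\|y_t\|^2 = 1$, and Equation~\eqref{eq0:eps_diff} gives $\epsilon_t - \epsilon_{t+1} = 1 - \|\bar U^T v_{t,\parallel}\|^2/\|v_{t,\parallel}\|^2$. Writing $A=\bar U^T U_t$ with SVD $A A^T = P\,\mathrm{diag}(c_i^2)\,P^T$ and setting $\tilde s_t = P^T s_t$, a short computation yields
\begin{equation}
\epsilon_t - \epsilon_{t+1} \;=\; \frac{\sum_{i=1}^d c_i^2\,(1-c_i^2)\,\tilde s_{t,i}^{\,2}}{\sum_{i=1}^d c_i^2\,\tilde s_{t,i}^{\,2}}.\nn
\end{equation}
Using $c_i^2 \geq 1/2$ in the numerator and $c_i^2 \leq 1$ in the denominator, this is at least $\tfrac{1}{2}\sum_i (1-c_i^2)\tilde s_{t,i}^{\,2}/\|\tilde s_t\|^2$. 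Since $\tilde s_t = P^T s_t$ is isotropic (it is a rotation of an isotropic vector), the symmetry identity $\mathbb{E}[\tilde s_{t,i}^{\,2}/\|\tilde s_t\|^2 \mid U_t] = 1/d$ gives
\begin{equation}
\mathbb{E}[\epsilon_t - \epsilon_{t+1}\mid U_t] \;\geq\; \frac{1}{2d}\sum_{i=1}^d (1-c_i^2) \;=\; \frac{\epsilon_t}{2d}, \qquad \mathbb{E}[\epsilon_{t+1}\mid U_t] \;\leq\; \Bigl(1-\tfrac{1}{2d}\Bigr)\epsilon_t. \nn
\end{equation}

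Finally I would iterate this one-step contraction via the tower property to obtain $\mathbb{E}[\epsilon_{k+K_2}\mid U_k] \leq (1-1/(2d))^{K_2}\epsilon_k \leq e^{-K_2/(2d)} \epsilon_k$, combine with the bound $\epsilon_k \leq \log 2 < 1$ from the first paragraph, and conclude via Markov that $\mathbb{P}(\epsilon_{k+K_2} > \epsilon^*) \leq e^{-K_2/(2d)}/\epsilon^*$, which is at most $\rho$ as soon as $K_2 \geq 2d\log\bigl(1/(\epsilon^*\rho)\bigr)$. The main obstacle I anticipate is the expected--improvement lower bound: the quantity $\epsilon_t - \epsilon_{t+1}$ is a \emph{ratio} of quadratic forms in $s_t$, and handling the expectation rigorously requires the isotropy assumption plus the uniform lower bound $c_i^2 \geq 1/2$ supplied by $\zeta_t \geq 1/2$. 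Everything else (monotonicity of $\zeta_t$, the AM--GM bound on $\epsilon_k$, the Markov step) is routine once that contraction is in hand.
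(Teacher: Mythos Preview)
Your proposal is correct and follows essentially the same route as the paper: use the monotonicity of $\zeta_t$ to secure $\cos^2\phi_{t,i}\geq 1/2$ for all $t\geq k$, derive the one-step contraction $\mathbb{E}[\epsilon_{t+1}\mid U_t]\leq (1-1/(2d))\epsilon_t$ from the identity $\epsilon_t-\epsilon_{t+1}=1-\|\bar U^T v_{t,\parallel}\|^2/\|v_{t,\parallel}\|^2$, iterate, bound the starting value $\epsilon_k$ by a constant at most $1$ (the paper uses the elementary inequality $\epsilon_k\leq 2(1-\zeta_k)\leq 1$ rather than your AM--GM route to $\log 2$), and finish with Markov. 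One small caveat: the per-coordinate claim $\mathbb{E}[\tilde s_{t,i}^{\,2}/\|\tilde s_t\|^2\mid U_t]=1/d$ requires full rotational invariance of $s_t$, which Condition~\ref{cond:fullnoise} does not assert; however the aggregate identity $\mathbb{E}[s_t^TQ s_t/\|s_t\|^2]=\trace(Q)/d$ applied with $Q=P\,\mathrm{diag}(1-c_i^2)\,P^T$ (exactly the paper's Lemma~\ref{lem:expisovec}) yields the same bound, so the argument goes through unchanged.
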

In the first phase, we require $O\left(d^3\log(n)/\rho'\right)$ iterations to reach the local region of the global minimum, where $1 - \rho'$ is the probability with which we'll reach the local region. In simulations (Section \ref{sec:numerical}, Figure~\ref{fig:Knfree}) with isotropic Gaussian data vectors from the subspace, we actually see that $O(d^3\log(n))$ iterations are many more than enough to reach the local region, without fail. Our analysis, though, only requires zero-mean uncorrelated identically distributed random data vectors. Bounds on higher moments may admit a tighter analysis, which we leave for future work. 

The second phase only requires $O(d \log (1/\epsilon^* \rho))$ iterations to converge to $\epsilon^*$ accuracy in the Frobenius norm discrepancy metric given in Definition~\ref{defn:epsdiscrepancy}. This result is true to what we see in practice, as you can see in Figure~\ref{fig:Knfree}. The analysis behind this result provides a tighter version of [\cite{balzano2014local}, Theorem 3.2] that both grows the local region of convergence and (slightly) improves the rate to be less dependent on the current value of $\epsilon_t$.

\subsection{Convergence With Noise}
\label{sec:conv_noise}
In this section, we study the convergence behavior of GROUSE with noise in each observation. Unlike the noise-free case, here we only provide \emph{expected monotonic} improvements of our convergence metrics. As we prove in the appendix, the results we present here also imply 
the corresponding ones for the noiseless data. 
\begin{theorem}[Expected convergence rate of the determinant similarity $\zeta_t$]
	Given Condition \ref{cond:fullnoise} is satisfied, after one iteration of GROUSE we have the following improvement of the determinant similarity in expectation:
	\begin{equation}
		\mathbb{E}\left[\zeta_{t + 1}\bigg\lvert U_t\right] \geq \left(1 + \beta_0\frac{1 - \zeta_t}{d}\left(1 - \frac{\sigma^2}{\frac{1 - \zeta_t}{d} + \sigma^2}\right)\right)\zeta_t\nn
	\end{equation}
	where $\beta_0 = \frac{1}{1 + \frac{d}{n}\sigma^2}$.
	\label{thm:detratio_exp}
\end{theorem}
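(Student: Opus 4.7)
The plan is to derive an exact recursion for $\zeta_{t+1}/\zeta_t$ by exploiting the rank-one nature of the GROUSE update, and then take conditional expectation over $(s_t,\xi_t)$ given $U_t$. The key structural observation is that the update modifies only a single direction in $R(U_t)$: choosing an orthonormal basis $\{u_1,\dots,u_d\}$ of $R(U_t)$ with $u_1 = p_t/\|p_t\|$, we obtain $U_{t+1} = [y_t/\|y_t\|,\, u_2,\dots,u_d]$. Writing $A = \bar U^T U_t$ and expanding $\det(\bar U^T U_{t+1})$ along the single modified column by multilinearity yields
\begin{equation}
\frac{\zeta_{t+1}}{\zeta_t} \;=\; \left(\cos\theta_t \;+\; \frac{q^T \bar U^T r_t/\|r_t\|}{q^T \bar U^T p_t/\|p_t\|}\,\sin\theta_t\right)^{\!2}, \nn
\end{equation}
where $q \in \R^d$ is the unit vector orthogonal to the columns $\bar U^T u_2,\dots,\bar U^T u_d$; this collapses to (\ref{eq0:eps_diff}) in the noise-free limit after using $v_t \in R(\bar U)$.

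Next I substitute $\cos\theta_t$ and $\sin\theta_t$ from (\ref{eq:theta}) (so that $\tan\theta_t = (1-\alpha_t)\|r_t\|/\|p_t\|$) and decompose $p_t = v_{t,\parallel} + U_t U_t^T \xi_t$, $r_t = v_{t,\perp} + (I - U_t U_t^T)\xi_t$. The conditional expectation is taken in two layers: first over the isotropic Gaussian $\xi_t$, for which $\mathbb{E}[\|(I - U_t U_t^T)\xi_t\|^2 \mid v_t] = (1 - d/n)\,\mathbb{E}[\|\xi_t\|^2 \mid v_t] \le (1 - d/n)\sigma^2\|v_t\|^2$ by Condition \ref{cond:fullnoise}; then over $s_t$, using $\mathbb{E}s_t = 0$ and $\operatorname{Cov}(s_t) = I_d$ to convert quadratic forms in $v_t$ into traces involving $(I - U_t U_t^T)\bar U \bar U^T$. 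The averaged signal strength $\mathbb{E}[\|v_{t,\perp}\|^2 \mid U_t] = \operatorname{tr}((I - U_t U_t^T)\bar U \bar U^T) = \epsilon_t$ sets the scale, and a companion minor-expansion computation for $\mathbb{E}_{s_t}[(q^T \bar U^T v_{t,\perp})^2]$ ties the determinant factor $(1 - \zeta_t)/d$ to the signal energy visible through the metric in Definition \ref{defn:detdiscrepancy}.

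The main obstacle is the random denominator $\|r_t\|^2$ that appears inside $\alpha_t$. My plan is to exploit the engineered cancellation in (\ref{eq:alpha_size}): the idealized choice $\alpha_t = 1 - \|v_{t,\perp}\|^2/\|r_t\|^2$ makes $(1 - \alpha_t)\|r_t\|^2 = \|v_{t,\perp}\|^2$ identically, so the noise contribution in the numerator of $\sin\theta_t$ vanishes and only signal information remains. For the actual $\alpha_t$ expressed via $\|x_t\|^2/\|r_t\|^2$, Gaussian concentration of $\|r_t\|^2$ around $\|v_{t,\perp}\|^2 + (1 - d/n)\|\xi_t\|^2$ combined with Jensen's inequality on the convex map $z \mapsto z/(z + a)$ pushes the expectation inside the rational expression in the direction consistent with the claimed lower bound. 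Matching the surviving noise and signal terms against the factor $1 - \sigma^2/(\frac{1-\zeta_t}{d} + \sigma^2)$ then identifies $(1 - \zeta_t)/d$ as the averaged per-direction signal strength perpendicular to $R(U_t)$ as seen through the determinant metric, while the outer factor $\beta_0 = 1/(1 + \frac{d}{n}\sigma^2)$ emerges from the ratio $\mathbb{E}\|v_t\|^2/\mathbb{E}\|x_t\|^2$ under Condition \ref{cond:fullnoise}. Collecting terms yields the stated lower bound on $\mathbb{E}[\zeta_{t+1}\mid U_t]$.
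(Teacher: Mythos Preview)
Your overall architecture---rank-one determinant formula, exploit the engineered cancellation in $\alpha_t$, then two layers of conditional expectation with Jensen---matches the paper's route. But two concrete steps in your plan would fail as stated.

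First, your identification of where $\beta_0 = 1/(1 + \tfrac{d}{n}\sigma^2)$ comes from is wrong. The ratio $\mathbb{E}\|v_t\|^2/\mathbb{E}\|x_t\|^2$ under Condition~\ref{cond:fullnoise} is $1/(1+\sigma^2)$, not $1/(1+\tfrac{d}{n}\sigma^2)$. In the paper the factor $\beta_0$ arises from bounding the \emph{projected} quantity $\mathbb{E}[\|p_t\|^2 \mid v_t] \le (1 + \tfrac{d}{n}\sigma^2)\|v_t\|^2$ (Lemma~\ref{lem:rp_conc}): only the $d$-dimensional component $\xi_{t,\parallel}$ of the noise enters $p_t$, which is why $\tfrac{d}{n}$ appears. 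If you trace $\beta_0$ to $\|x_t\|^2$ you will get the wrong constant.

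Second, the map $z \mapsto z/(z+a)$ is \emph{concave} for $a>0$, not convex, so Jensen applied to it goes the wrong way for a lower bound. The paper's Jensen steps (the inequalities labeled $\vartheta_1$ and $\vartheta_3$ in the proof of Theorem~\ref{thm:detratio_exp}) instead use convexity of $z \mapsto 1/z$: writing $Z^2/(Z+a) = Z - a + a^2/(Z+a)$ and applying $\mathbb{E}[1/(Z+a)] \ge 1/(\mathbb{E}[Z]+a)$ gives $\mathbb{E}[Z^2/(Z+a)] \ge (\mathbb{E}[Z])^2/(\mathbb{E}[Z]+a)$, which is the direction you need.

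A further caution: your vector $q$ depends on $p_t$ and hence on both $s_t$ and $\xi_t$, so the proposed computation of $\mathbb{E}_{s_t}[(q^T\bar U^T v_{t,\perp})^2]$ is not a clean quadratic-form-in-$s_t$ calculation. The paper sidesteps this by using the matrix determinant lemma directly: one gets $w_t^T(\bar U^T U_t)^{-1}\bar U^T r_t$ with $(\bar U^T U_t)^{-1}$ deterministic given $U_t$, and then splits $w_t = U_t^T v_t + U_t^T\xi_t$ to isolate a signal piece $\|v_{t,\perp}\|^2$ plus a mean-zero cross term (Lemma~\ref{lem:exp_delta}). That mean-zero cross term is the precise mechanism by which ``the noise contribution vanishes''; your proposal gestures at this but does not name it. After dropping the cross term the paper lands on Lemma~\ref{lem:det_ratio_exp}, and from there only the expectation bound $\mathbb{E}[\|v_{t,\perp}\|^2/\|v_t\|^2 \mid U_t] \ge (1-\zeta_t)/d$ (Lemma~\ref{lem:vconc}) is needed---no separate minor-expansion computation.
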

This theorem implies that the expected convergence rate of determinant similarity is damped by the presence of noise. To be more specific, rewrite the expected improvement as $\mathbb{E}\left[\zeta_{t + 1}\big\lvert U_t\right] \geq \left(1 + \frac{\beta_0}{(1 - \zeta_t)/d + \sigma^2}\left(\frac{1 - \zeta_t}{d}\right)^2\right)\zeta_t$. We can see that, comparing with the noiseless case, for small SNR (large $\sigma^2$), the expected increment on $\zeta_t$ is approximately scaled by $\frac{1 - \zeta_t}{d} < \frac{1}{d}$. Hence the theoretical bound on the iterations necessary to achieve given accuracy $\zeta^{\ast}$ in the small SNR case should roughly be at least $d$ times that required by the noiseless case.  For large SNR (small $\sigma^2$), the expected convergence rate is close to that of the noise-free case, as long as $\zeta_t$ is not too close to $1$. Therefore, the required iterations to arrive at the local region of the true subspace should be close to that in the noiseless case. We show the corresponding numerical illustrations in Figure \ref{fig:deteps_rate} and Figure \ref{fig:K12_noise}.  

\begin{theorem}[Expected convergence rate of the Frobenius norm discrepancy $\epsilon_t$]
	Under Condition \ref{cond:fullnoise}, we obtain the following upper bound on the decrease of Frobenius norm discrepancy $\epsilon_t$ in expectation:
	\begin{equation}
		\mathbb{E}\left[\epsilon_{t + 1}\big\lvert U_t\right] \leq \left(1 -  \frac{\beta_0}{d}\left(\cos^2\phi_{t,d} - \frac{\beta_1\sigma^2}{\frac{\epsilon_t}{d} + \beta_1\sigma^2}\right)\right)\epsilon_t   \nn
	\end{equation}
	where $\beta_0 = \frac{1}{1 + \frac{d}{n}\sigma^2}$, $\beta_1 = 1 - \frac{d}{n}$, and $\phi_{t,d}$ is the largest principal angle between $R(U_t)$ and $R(\bar U)$.
	\label{thm:eps_diff_exp}
\end{theorem}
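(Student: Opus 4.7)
The plan is to extend the identity~\eqref{eq0:zeta_ratio} from the noise-free case to the noisy setting and then take conditional expectations. Since the GROUSE update replaces $p_t/\|p_t\|$ by $y_t/\|y_t\|$ in the column projector $U_{t+1}U_{t+1}^T$, the same identity
\begin{equation}
\epsilon_t - \epsilon_{t+1} \;=\; \frac{\|\bar U^T y_t\|^2}{\|y_t\|^2} \;-\; \frac{\|\bar U^T p_t\|^2}{\|p_t\|^2} \nn
\end{equation}
carries over, with $p_t = U_tU_t^T x_t$ and $r_t = (I - U_tU_t^T)x_t$ now computed from $x_t = \bar U s_t + \xi_t$. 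Writing $\gamma_t := 1-\alpha_t$ and $\tilde y_t := p_t + \gamma_t r_t$, so that $y_t/\|y_t\| = \tilde y_t/\|\tilde y_t\|$, a short expansion using $p_t \perp r_t$ yields
\begin{equation}
\epsilon_t - \epsilon_{t+1} \;=\; \frac{\gamma_t^2\bigl(\|p_t\|^2\|\bar U^T r_t\|^2 - \|r_t\|^2\|\bar U^T p_t\|^2\bigr) + 2\gamma_t\|p_t\|^2\,p_t^T\bar U\bar U^T r_t}{\|p_t\|^2\bigl(\|p_t\|^2 + \gamma_t^2\|r_t\|^2\bigr)}. \nn
\end{equation}

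Next I would compute $\E[\epsilon_{t+1}\mid U_t]$ in two stages: first average over $\xi_t$ conditional on $s_t$, then over $s_t$. Introducing $M := \bar U^T U_t \in \R^{d\times d}$, whose singular values are $\cos\phi_{t,1},\ldots,\cos\phi_{t,d}$, turns every quantity into Gaussian quadratic forms in $\xi_t$ with $s_t$-dependent coefficients. All cross terms between $s_t$ and $\xi_t$ vanish by independence and $\E\xi_t = 0$. For the surviving pieces I would invoke two standard identities: $\E\|P\xi_t\|^2 = \sigma_\xi^2\,\trace(P)$ for any orthogonal projector $P$, which supplies the prefactors $d$ and $n-d$ and hence the $\beta_1 = 1-d/n$ attenuation on the perpendicular-noise energy, and $\E\,s_t^T A s_t = \trace(A)$, which converts signal quadratic forms into spectral expressions in $MM^T$. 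In particular $\E\,s_t^T(I-MM^T)s_t = \epsilon_t$, and the smallest-angle factor $\cos^2\phi_{t,d}$ emerges from the lower bound $\E\,s_t^T(I-MM^T)MM^T s_t = \sum_i\cos^2\phi_{t,i}\sin^2\phi_{t,i} \geq \cos^2\phi_{t,d}\cdot\epsilon_t$, since $\phi_{t,d}$ is the largest principal angle.

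The main obstacle is the random denominator $\|p_t\|^2(\|p_t\|^2 + \gamma_t^2\|r_t\|^2)$, which couples to both $s_t$ and $\xi_t$; moreover $\gamma_t$ is itself a function of $\|x_t\|^2/\|r_t\|^2$, so a naive Jensen step does not cleanly produce the stated rational expression. The key observation is that the weighted step size~\eqref{eq:alpha_size} is tuned so that $\gamma_t^2\|r_t\|^2$ concentrates around $\|v_{t,\perp}\|^2$; after the noise expectation the denominator then behaves like $\|p_t\|^2\|x_t\|^2$, and the expected ratio $\|v_{t,\parallel}\|^2/\|x_t\|^2$ produces the normalization $\beta_0 = 1/(1+(d/n)\sigma^2)$. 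A clean way to proceed is to upper-bound the denominator deterministically by $\|p_t\|^2\|x_t\|^2$ (using $\gamma_t \leq 1$ and $\|p_t\|^2 + \|r_t\|^2 = \|x_t\|^2$), which isolates the numerator's signal-plus-noise decomposition from the ratio structure. A careful analysis of the numerator (including sign control on the cross term $\gamma_t\,p_t^T\bar U\bar U^T r_t$, whose noise contribution vanishes via the trace identity $\trace((I-U_tU_t^T)U_tU_t^T\bar U\bar U^T) = 0$ and whose signal contribution $\sum_i\cos^2\phi_{t,i}\sin^2\phi_{t,i}$ is non-negative) then yields the positive principal-angle contribution $(\beta_0/d)\cos^2\phi_{t,d}\,\epsilon_t$ offset by a noise correction $(\beta_0/d)\cdot\beta_1\sigma^2/(\epsilon_t/d + \beta_1\sigma^2)\cdot\epsilon_t$, matching the stated bound and collapsing to the noise-free rate as $\sigma\to 0$.
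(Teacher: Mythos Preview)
Your starting identity and the rational expansion are correct, but the proposed route to the bound has a genuine gap. You suggest upper-bounding the denominator by $\|p_t\|^2\|x_t\|^2$ and then analyzing the numerator. That step is only valid when the numerator is nonnegative; since in the noisy case $\epsilon_t-\epsilon_{t+1}$ can be negative (the paper explicitly notes improvement is not monotonic), there are realizations where your numerator is negative, and replacing the denominator by a larger quantity then moves the bound in the wrong direction. Your numerator carries the large negative term $-\gamma_t^2\|r_t\|^2\|\bar U^Tp_t\|^2$, and nothing in your sketch controls the sign pathwise. Also, the claim that $\gamma_t^2\|r_t\|^2$ ``concentrates around $\|v_{t,\perp}\|^2$'' is imprecise: with the paper's idealized choice $1-\alpha_t=\|v_{t,\perp}\|^2/\|r_t\|^2$ one has $\gamma_t\|r_t\|^2=\|v_{t,\perp}\|^2$ exactly, hence $\gamma_t^2\|r_t\|^2=\|v_{t,\perp}\|^4/\|r_t\|^2$, which is strictly smaller than $\|v_{t,\perp}\|^2$ whenever noise is present.

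The paper avoids the sign problem by a different decomposition. Since $(I-\bar U\bar U^T)v=0$, one has
\[
\frac{\|\bar U^Ty_t\|^2}{\|y_t\|^2}\;=\;1-\frac{\|(I-\bar U\bar U^T)(\xi_t-\alpha_t r_t)\|^2}{\|y_t\|^2}\;=\;1-\mathcal R,
\]
so that $\epsilon_t-\epsilon_{t+1}=\|(I-\bar U\bar U^T)p_t\|^2/\|p_t\|^2-\mathcal R$ with $\mathcal R\ge 0$. Now bounding the denominator of $\mathcal R$ below by $\|p_t\|^2$ is legitimate regardless of the overall sign, because $\mathcal R$ itself is nonnegative. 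After this, the $\|(I-\bar U\bar U^T)\xi_{t,\parallel}\|^2$ pieces cancel between the two terms, the remaining cross term has zero conditional mean (Lemma~\ref{lem:exp_delta}), and the specific step size gives the exact algebraic identity
\[
\|\xi_{t,\perp}-\alpha_t r_t\|^2=(1-\alpha_t)\|\xi_{t,\perp}\|^2,
\]
which you do not identify but which is what produces the clean rational $\beta_1\sigma^2/(\epsilon_t/d+\beta_1\sigma^2)$ after two Jensen steps (first in $\xi_{t,\perp}$ given $v_t$, then in $\|v_{t,\perp}\|^2/\|v_t\|^2$) and Lemma~\ref{lem:vconc}. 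Your identification of the $\cos^2\phi_{t,d}$ factor via $\sum_i\cos^2\phi_{t,i}\sin^2\phi_{t,i}\ge\cos^2\phi_{t,d}\,\epsilon_t$ and of the trace identity for the noise cross term are both correct and do appear in the paper's argument, but they are not enough on their own to close the bound without the $1-\mathcal R$ splitting.
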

As indicated by Theorem \ref{thm:detratio_exp}, the expected convergence rate will slow down as $\zeta_t$ increases. However, the above theorem implies that for large SNR (small $\sigma^2$), once we enter the local region of the true subspace, the convergence rate of the Frobenius discrepancy will take over. Specifically, when $\cos^2\phi_{t,d} > 1/2$, the convergence rate of $\epsilon_t$ can be bounded from below by $1 - \left(\frac{1}{2} - \frac{1}{d}\right)\frac{1}{d}$ as long as $\epsilon_t \geq d^2\sigma^2$. Therefore, an implication of Theorem \ref{thm:eps_diff_exp} is that GROUSE will converge to a ball centered on the true subspace whose radius is determined by the noise level and subspace dimension. The convergence rate will slow as GROUSE approaches this ball. On the other hand, since $1 - \epsilon_t \leq \cos^2\phi_{t,d} \leq 1 - \epsilon_t/d$, by a simple calculation we can see that for small SNR (large $\sigma^2$), the fast local convergence never kicks in. In that case, we only study the convergence behavior of GROUSE in terms of the determinant similarity $\zeta_t$. 

As we mentioned previously, with noise the improvement is not monotonic for either determinant similarity ($\zeta_t$) or Frobenius norm discrepancy ($\epsilon_t$). This is a hurdle to pass before we can provide similar global convergence results as we obtained for the noise-free case (Theorem \ref{thm:global}). However, by leveraging techniques in stochastic process theory, it might be possible to establish asymptotic convergence results or even non-asymptotic convergence results in terms of the number of iterations required before GROUSE first achieves a given accuracy. We leave this as future work.

\section{Supporting Theory}
\label{sec:supportingresults}
We first call out the following lemma to quantify the expectation of the determinant similarity between our random initialization and the true subspace. For convenience, we will drop the subscript of all terms except $\epsilon_t$, $\zeta_t$ and $U_t$ hereafter.

\begin{lemma}\cite{nguyen2014random}
Initialize the starting point $U_0$ of GROUSE as the orthonormalization of an $n\times d$ matrix
with entries being standard normal variables. Then 
\begin{equation}
  	\mathbb{E}[\zeta_0] = \mathbb{E}\left[\det(U_0^T\bar U\bar U^TU_0)\right] = C \left(\frac{d}{n e}\right)^d \nonumber
  	\label{eq:exp_zeta0}
 \end{equation} 
 where $C>0$ is a constant approximately equal to $1$. 	 
	\label{lem:exp_zeta0}
\end{lemma}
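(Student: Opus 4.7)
The plan is to exploit the rotation invariance of the Gaussian measure and reduce the claim to a single Wishart-quotient computation. Since the orthonormalization of a matrix with i.i.d.\ $\cN(0,1)$ entries yields $R(U_0)$ uniformly distributed on $\mathcal{G}(n,d)$, the joint law of $\bar U^T U_0$ depends on $\bar U$ only through its being an orthonormal basis. I would therefore take $\bar U$ to consist of the first $d$ standard basis vectors, write the underlying $n \times d$ Gaussian as $V = \begin{pmatrix} V_1 \\ V_2 \end{pmatrix}$ with independent blocks $V_1 \in \R^{d \times d}$ and $V_2 \in \R^{(n-d) \times d}$, and use the polar factorization $U_0 = V(V^T V)^{-1/2}$, which yields $\bar U^T U_0 = V_1(V^T V)^{-1/2}$ and therefore
\begin{equation*}
\zeta_0 \;=\; \det\bigl(U_0^T \bar U \bar U^T U_0\bigr) \;=\; \frac{\det(V_1^T V_1)}{\det(V_1^T V_1 + V_2^T V_2)}.
\end{equation*}

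The next step is to recognize the right-hand side as the determinant of a matrix-variate Beta variable. The Wishart matrices $W_1 := V_1^T V_1$ and $W_2 := V_2^T V_2$ are independent with $d$ and $n-d$ degrees of freedom on $\R^d$, so $W_1(W_1 + W_2)^{-1}$ is a matrix Beta$_d(d/2,(n-d)/2)$ random variable, and its expected determinant has the closed form
\begin{equation*}
\E[\zeta_0] \;=\; \frac{\Gamma_d(d/2 + 1)\,\Gamma_d(n/2)}{\Gamma_d(d/2)\,\Gamma_d(n/2 + 1)},
\end{equation*}
where $\Gamma_d(x) = \pi^{d(d-1)/4} \prod_{i=1}^d \Gamma(x - (i-1)/2)$ is the multivariate gamma function. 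Applying $\Gamma(y+1)/\Gamma(y) = y$ factor by factor, the numerator ratio telescopes to $d!/2^d$ and the denominator ratio to $\bigl(n(n-1)\cdots(n-d+1)\bigr)/2^d$, giving the clean identity $\E[\zeta_0] = 1/\binom{n}{d}$.

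Finally I would convert $1/\binom{n}{d}$ into the stated form $C(d/(ne))^d$ via Stirling. Writing $\binom{n}{d} = \frac{n^d}{d!}\prod_{k=0}^{d-1}(1-k/n)$ and using $d! = \sqrt{2\pi d}\,(d/e)^d(1 + O(1/d))$ gives the dominant behavior $1/\binom{n}{d} \sim \sqrt{2\pi d}\,(d/(ne))^d$, with the correction factor $\prod_{k=0}^{d-1}(1 - k/n) \approx e^{-d^2/(2n)}$ close to $1$ in the natural regime $d \ll \sqrt{n}$; all slowly-varying prefactors are absorbed into the constant $C$. The only non-routine ingredient is the middle step—identifying the exact matrix-variate Beta distribution and recalling its determinantal moment formula—while the first step (rotation invariance plus polar factorization) and the last (Stirling) are standard bookkeeping.
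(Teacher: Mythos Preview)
The paper does not supply its own proof of this lemma; it is stated with a citation to \cite{nguyen2014random} and then used as a black box in the proof of Theorem~\ref{thm:detconvgrate}. So there is no ``paper's proof'' to compare against.

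That said, your argument is correct and self-contained. The reduction via rotation invariance and the polar factorization $U_0 = V(V^TV)^{-1/2}$ cleanly gives $\zeta_0 = \det(V_1^TV_1)/\det(V_1^TV_1 + V_2^TV_2)$, and your identification of this as a Wilks' Lambda / matrix-Beta determinant yields the exact value $\E[\zeta_0] = 1/\binom{n}{d}$; the multivariate-gamma manipulation and the telescoping to $d!/\bigl(n(n-1)\cdots(n-d+1)\bigr)$ are both right. One small caveat worth flagging: when you absorb the ``slowly-varying prefactors'' into $C$, that prefactor is essentially $\sqrt{2\pi d}\,\big/\prod_{k=0}^{d-1}(1-k/n)$, which is not literally close to $1$ for moderate $d$. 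The paper's phrase ``approximately equal to $1$'' is therefore a mild overstatement of the lemma as written, though it is harmless for how the result is used (only $\log C$ enters, and it is dominated by $d\log n$ in the definition of $\mu_0$).
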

As we mentioned in Section \ref{sec:step_size}, both the determinant similarity $\zeta_t$ and the Frobenius discrepancy $\epsilon_t$ improve monotonically in the noiseless case. We formally present this in the following lemma. 
\begin{lemma}[Monotonic results for the noiseless case] When there is no noise, given the step size in Eq (\ref{eq:theta}), after one update of GROUSE we obtain 
	\begin{align}
		\frac{\zeta_{t + 1}}{\zeta_t} = 1 + \frac{\|v_{\perp}\|^2}{\|v_{\parallel}\|^2} \;, \quad \text{and}\quad \epsilon_t - \epsilon_{t + 1} = 1 - \frac{\|\bar U\bar U^T v_{\parallel}\|^2}{\|v_{\parallel}\|^2}  \nn
	\end{align}
	where $v_{\parallel}$ and $v_{\perp}$ denote the projection and residual of $v$ onto $R(U_t)$. 
	\label{lem:mono}
\end{lemma}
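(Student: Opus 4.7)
The plan is to use the two preliminary identities~\eqref{eq0:eps_diff} and~\eqref{eq0:zeta_ratio} already stated in Section~\ref{sec:step_size}, and simply substitute the optimal noiseless step size $\theta_t^\ast = \arctan(\|v_\perp\|/\|v_\parallel\|)$ (i.e., choice~\eqref{eq:theta} with $\alpha_t = 0$). Under this step size, since $\|v\|^2 = \|v_\parallel\|^2 + \|v_\perp\|^2$, I get the basic trigonometric relations
$$\cos\theta_t^\ast = \frac{\|v_\parallel\|}{\|v\|}, \qquad \sin\theta_t^\ast = \frac{\|v_\perp\|}{\|v\|}.$$
Both claimed identities then fall out by a direct algebraic substitution.

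For the determinant ratio, I plug the above into~\eqref{eq0:eps_diff}:
$$\frac{\zeta_{t+1}}{\zeta_t} = \left(\frac{\|v_\parallel\|}{\|v\|} + \frac{\|v_\perp\|}{\|v_\parallel\|}\cdot\frac{\|v_\perp\|}{\|v\|}\right)^{\!2} = \left(\frac{\|v_\parallel\|^2 + \|v_\perp\|^2}{\|v\|\,\|v_\parallel\|}\right)^{\!2} = \frac{\|v\|^2}{\|v_\parallel\|^2} = 1 + \frac{\|v_\perp\|^2}{\|v_\parallel\|^2}.$$
For the Frobenius discrepancy, observe that under $\theta_t^\ast$ the updated direction simplifies as
$$\frac{y_t}{\|y_t\|} = \frac{\|v_\parallel\|}{\|v\|}\cdot\frac{v_\parallel}{\|v_\parallel\|} + \frac{\|v_\perp\|}{\|v\|}\cdot\frac{v_\perp}{\|v_\perp\|} = \frac{v_\parallel + v_\perp}{\|v\|} = \frac{v}{\|v\|},$$
so $\|\bar U^T y_t\|^2/\|y_t\|^2 = \|\bar U^T v\|^2/\|v\|^2 = 1$, using $v \in R(\bar U)$ in the noiseless setting. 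Because $\bar U$ has orthonormal columns, $\|\bar U\bar U^T v_\parallel\|^2 = v_\parallel^T \bar U\bar U^T\bar U\bar U^T v_\parallel = \|\bar U^T v_\parallel\|^2$, so plugging into~\eqref{eq0:zeta_ratio} yields
$$\epsilon_t - \epsilon_{t+1} = 1 - \frac{\|\bar U^T v_\parallel\|^2}{\|v_\parallel\|^2} = 1 - \frac{\|\bar U\bar U^T v_\parallel\|^2}{\|v_\parallel\|^2},$$
as required.

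Given that the substitution itself is routine, the real work underlying this lemma is the derivation of the two preliminary identities~\eqref{eq0:eps_diff} and~\eqref{eq0:zeta_ratio}, which the excerpt defers to Appendix~\ref{sec:proof_of_supporting_theory}. The main obstacle there is to exploit the rank-one structure of the GROUSE update~\eqref{eq:gpupdate}: by rotating into an orthonormal basis of $R(U_t)$ whose first column is $w_t/\|w_t\|$, the update only alters that single column (replacing $p_t/\|p_t\|$ by $y_t/\|y_t\|$) while leaving the remaining $d-1$ orthogonal directions fixed. This reduces the change in $\det(\bar U^T U_tU_t^T\bar U)$ and in $\|\bar U^T U_t\|_F^2$ to a two-dimensional calculation in the $(p_t,r_t)$-plane, from which~\eqref{eq0:eps_diff} and~\eqref{eq0:zeta_ratio} follow; monotonicity then comes for free from the fact that the substituted ratios are manifestly $\ge 1$ and $\ge 0$ respectively.
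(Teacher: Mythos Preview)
Your argument is correct and matches the paper's route: both claims of Lemma~\ref{lem:mono} are obtained in the paper as ``Remarks'' that specialize the general update identities (derived in Appendix~\ref{sec:proof_of_supporting_theory}) to the noiseless case $y=v$, $p=v_\parallel$, exactly as you do by substituting $\theta_t^\ast$ into~\eqref{eq0:eps_diff}--\eqref{eq0:zeta_ratio}.

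One small point on your final paragraph: for the $\epsilon$-identity the paper uses precisely the rotation by $W_t=[\,w/\|w\|\mid Z\,]$ you describe, but for the $\zeta$-identity it does \emph{not} reduce to a two-dimensional calculation in the $(p_t,r_t)$-plane. Instead it applies the matrix determinant lemma $\det(A+ab^T)=\det(A)(1+b^TA^{-1}a)$ directly to $\bar U^TU_{t+1}=\bar U^TU_t+\bigl(\bar U^Ty/\|y\|-\bar U^Tp/\|p\|\bigr)w^T/\|w\|$, and then simplifies using $w^T(\bar U^TU_t)^{-1}\bar U^Tp=\|w\|^2$ and, in the noiseless case, $w^T(\bar U^TU_t)^{-1}\bar U^Tv=s^T\bar U^Tv=\|v\|^2$. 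Your rotation idea can be pushed through for the determinant as well (via multilinearity in the first column), but the matrix determinant lemma is the tool the paper actually uses and is the cleaner device to name here.
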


For the noisy case, we provide the following lemmas, which are the intermediate results that allow us to establish the expected improvements on both $\zeta_t$ and $\epsilon_t$ in Section \ref{sec:conv_noise}. 

\begin{lemma}
Given Condition \ref{cond:fullnoise} is satisfied, after one update of GROUSE we obtain the following  
\begin{equation}
	\mathbb{E}\left[\zeta_{t + 1}\big\lvert U_t\right] \geq \left(1 +  \mathbb{E}\left[(1 - \alpha)^2\frac{\|r\|^2}{\|p\|^2}\bigg\lvert U_t\right]\right)\zeta_t\nn
\end{equation}
\label{lem:det_ratio_exp}	
\end{lemma}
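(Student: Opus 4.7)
My plan is to derive an exact one-step expression for the ratio $\zeta_{t+1}/\zeta_t$ via a rank-one update argument, reduce the target inequality to a first-moment claim using Cauchy--Schwarz, and verify that claim by computing the relevant signal/noise moments. Set $V_t := \bar U^T U_t$ (so $\zeta_t = (\det V_t)^2$) and $e_t := w_t/\|w_t\|$. Using $\bar U^T p_t = V_t w_t$ together with $\|p_t\| = \|w_t\|$, the GROUSE update (\ref{eq:gpupdate}) projected onto $\bar U$ becomes the rank-one perturbation $V_{t+1} = V_t + u_t e_t^T$ with $u_t := (\cos\theta_t - 1) V_t e_t + \sin\theta_t\,\bar U^T r_t/\|r_t\|$. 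On the event $\{\zeta_t>0\}$ (which holds almost surely), the matrix determinant lemma together with the cancellation $V_t^{-1}(V_t e_t) = e_t$ yields
\[
\frac{\zeta_{t+1}}{\zeta_t} = (\cos\theta_t + \sin\theta_t\,Z_t)^2,\qquad Z_t := \frac{e_t^T V_t^{-1}\bar U^T r_t}{\|r_t\|},
\]
the noisy analogue of identity (\ref{eq0:eps_diff}); in the noise-free case one verifies $Z_t = \|v_{t,\perp}\|/\|v_{t,\parallel}\|$, recovering Lemma \ref{lem:mono}.

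Substituting $\tan\theta_t = \tau_t := (1-\alpha)\|r_t\|/\|p_t\|$, the ratio becomes $(1+\tau_t Z_t)^2/(1+\tau_t^2)$. Applying Cauchy--Schwarz to $X := (1+\tau_t Z_t)/\sqrt{1+\tau_t^2}$ and $Y := \sqrt{1+\tau_t^2}$ gives
\[
\mathbb{E}\!\left[\frac{\zeta_{t+1}}{\zeta_t}\,\Big|\,U_t\right] \;\geq\; \frac{\bigl(1 + \mathbb{E}[\tau_t Z_t \mid U_t]\bigr)^2}{1 + \mathbb{E}[\tau_t^2 \mid U_t]},
\]
so the lemma would follow from the cleaner first-moment claim $\mathbb{E}[\tau_t Z_t \mid U_t] \geq \mathbb{E}[\tau_t^2 \mid U_t]$.

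To address this first-moment claim, use the algebraic identity $V_t^{-1}\bar U^T r_t = V_t^{-1}\bar U^T x_t - w_t$ (from $\bar U^T p_t = V_t w_t$) and $\|p_t\| = \|w_t\|$ to rewrite the cross term as
\[
\tau_t Z_t = (1-\alpha)\,\bigl(x_t^T U_t V_t^{-1}\bar U^T x_t - \|w_t\|^2\bigr)\big/\|p_t\|^2.
\]
Expanding $x_t = \bar U s_t + \xi_t$ and invoking Condition \ref{cond:fullnoise} (independence of $s_t,\xi_t$, zero means, $\mathbb{E}[s_t s_t^T] = I_d$, isotropic Gaussianity of $\xi_t$) together with the trace identity $\mathrm{tr}(U_t V_t^{-1}\bar U^T) = \mathrm{tr}(V_t^{-1}V_t) = d$, one obtains the clean unconditional identity $\mathbb{E}[x_t^T U_t V_t^{-1}\bar U^T x_t - \|w_t\|^2\mid U_t] = d - \|V_t\|_F^2 = \epsilon_t$, the Frobenius discrepancy.

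The main obstacle is the coupling between the random denominator $\|p_t\|^2$, the factor $1-\alpha$, and the numerator: all three are functions of the same $(s_t,\xi_t)$, so the expectation cannot be factored pointwise. The specific form of $\alpha_t$ in (\ref{eq:alpha_size}) is precisely calibrated so that $1-\alpha_t \approx \|v_{t,\perp}\|^2/\|r_t\|^2$ (see the discussion after (\ref{eq:alpha_size})), which is exactly what is needed for the coupled expectation to yield $\mathbb{E}[\tau_t Z_t \mid U_t] \geq \mathbb{E}[\tau_t^2 \mid U_t]$. The Gaussianity of $\xi_t$ supplies the concentration of $\|r_t\|^2$ around $\|v_{t,\perp}\|^2 + (1-d/n)\|\xi_t\|^2$ that is needed to make this handling rigorous, after which combining with the Cauchy--Schwarz step completes the proof.
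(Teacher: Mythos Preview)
Your rank-one determinant computation is correct and matches the paper's starting point, but the subsequent route diverges and leaves a real gap.

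The Cauchy--Schwarz step is an unnecessary detour. The paper instead exploits two exact identities you overlook. First, $p\perp r$ gives $\|p+(1-\alpha)r\|^2=\|p\|^2+(1-\alpha)^2\|r\|^2$. Second, the paper takes $1-\alpha=\|v_\perp\|^2/\|r\|^2$ as an \emph{exact} equality (not an approximation, as you write), which yields $(1-\alpha)\|v_\perp\|^2=(1-\alpha)^2\|r\|^2$. With these, the numerator in the ratio becomes exactly $\|p\|^2+(1-\alpha)^2\|r\|^2+(1-\alpha)\Delta$, where $\Delta:=v_\perp^T\xi_\perp+w_2^T(\bar U^TU_t)^{-1}\bar U^Tr$ is the noise cross term. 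Writing $A=\|p\|^2+(1-\alpha)^2\|r\|^2$, the ratio is $(A+(1-\alpha)\Delta)^2/(A\|p\|^2)$, and simply expanding the square and dropping the nonnegative $\Delta^2$ piece gives
\[
\frac{\zeta_{t+1}}{\zeta_t}\ \ge\ 1+(1-\alpha)^2\frac{\|r\|^2}{\|p\|^2}+\frac{2(1-\alpha)\Delta}{\|p\|^2},
\]
after which the paper invokes Lemma~\ref{lem:exp_delta} to kill the last term in expectation. No Cauchy--Schwarz, no concentration.

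In your own variables this is the identity $\tau_tZ_t=\tau_t^2+(1-\alpha)\Delta/\|p\|^2$, so your ``first-moment claim'' $\mathbb{E}[\tau_tZ_t\mid U_t]\ge\mathbb{E}[\tau_t^2\mid U_t]$ is \emph{exactly} the paper's assertion that $\mathbb{E}[(1-\alpha)\Delta/\|p\|^2\mid U_t]=0$. You never make this identification; instead you compute the unweighted expectation $\mathbb{E}[x_t^TU_tV_t^{-1}\bar U^Tx_t-\|w_t\|^2\mid U_t]=\epsilon_t$, which is correct but irrelevant: it is $\mathbb{E}[\|v_\perp\|^2+\Delta\mid U_t]$, not the quantity divided by the random $\|p\|^2$ and multiplied by the random $1-\alpha$. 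Your closing appeal to Gaussian concentration does not bridge that gap, and the gap is precisely the content of Lemma~\ref{lem:exp_delta}, which you never invoke.
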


\begin{lemma}
After one iteration of the GROUSE algorithm, we have the following
	\begin{align}
		\mathbb{E}\left[\epsilon_t - \epsilon_{t + 1}\big\lvert U_t\right] = \mathbb{E}\left[1 - \mathcal{R} - \frac{\|\bar U\bar U^T p\|^2}{\|p\|^2} \bigg\lvert U_t\right]\nn 
	\end{align}
	where $\mathcal{R} = \frac{\|(I - \bar U\bar U^T)(\xi - \alpha r)\|^2}{\|v + \xi - \alpha r\|^2}$.
	\label{lem:epsdiff_expr}
\end{lemma}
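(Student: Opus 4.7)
My plan is to start from the general identity
\[
\epsilon_t - \epsilon_{t+1} = \frac{\|\bar U^T y\|^2}{\|y\|^2} - \frac{\|\bar U^T p\|^2}{\|p\|^2},
\]
which is exactly equation (\ref{eq0:zeta_ratio}) in the excerpt but derived now without the noise-free assumption (the identity holds because its derivation only uses the GROUSE update rule (\ref{eq:gpupdate}) and the orthonormality of $U_t$, not any property of $x_t$). Concretely, since $p = U_t w$ implies $\|p\|=\|w\|$ and $U_t (w/\|w\|) = p/\|p\|$, expanding
\[
\|\bar U^T U_{t+1}\|_F^2 = \bigl\| \bar U^T U_t + \bar U^T(y/\|y\| - p/\|p\|)\, w^T/\|w\| \bigr\|_F^2
\]
and using $\operatorname{tr}\bigl(\bar U^T U_t \tfrac{w}{\|w\|} (\tfrac{y}{\|y\|}-\tfrac{p}{\|p\|})^T \bar U\bigr) = \tfrac{p^T}{\|p\|} \bar U\bar U^T(\tfrac{y}{\|y\|}-\tfrac{p}{\|p\|})$ reduces the cross term so that everything collapses to $\|\bar U^T y\|^2/\|y\|^2 - \|\bar U^T p\|^2/\|p\|^2$. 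This step is routine but a little bit bookkeeping-heavy; I would keep it in a short displayed chain of equalities.

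Next, I would identify $y$ explicitly. Using the step size $\theta_t = \arctan\bigl((1-\alpha)\|r\|/\|p\|\bigr)$, one computes
\[
\frac{y}{\|y\|} = \cos\theta_t\frac{p}{\|p\|} + \sin\theta_t\frac{r}{\|r\|} = \frac{p + (1-\alpha)r}{\|p + (1-\alpha)r\|},
\]
where the last equality uses $p\perp r$. So up to a positive scalar, $y = p + (1-\alpha)r = x - \alpha r = v + \xi - \alpha r$. This representation is the key bridge between the abstract update and the noisy quantities that appear in $\mathcal{R}$.

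The third step is to reduce $\|\bar U^T y\|^2/\|y\|^2$ via the orthogonal decomposition $y = \bar U\bar U^T y + (I-\bar U\bar U^T)y$, giving $\|\bar U^T y\|^2/\|y\|^2 = 1 - \|(I-\bar U\bar U^T)y\|^2/\|y\|^2$. Because $v = \bar U s \in R(\bar U)$, the projection $(I-\bar U\bar U^T)v$ vanishes, hence $(I-\bar U\bar U^T)y = (I-\bar U\bar U^T)(\xi - \alpha r)$, and the denominator equals $\|v+\xi-\alpha r\|^2$. This exactly produces the quantity $\mathcal{R}$ defined in the statement. Substituting back and replacing $\|\bar U^T p\|^2$ by the equivalent $\|\bar U\bar U^T p\|^2$ gives the pointwise identity
\[
\epsilon_t - \epsilon_{t+1} = 1 - \mathcal{R} - \frac{\|\bar U\bar U^T p\|^2}{\|p\|^2}.
\]
Taking conditional expectation given $U_t$ on both sides yields the claim.

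I do not anticipate a genuine obstacle here; the work is essentially an identity, not an inequality. The one point that requires care is making sure the identity (\ref{eq0:zeta_ratio}) is justified in the noisy setting, since in the excerpt it was stated in the noise-free section. Once that is in hand, the argument is a purely algebraic rearrangement that uses only $v\in R(\bar U)$ and the explicit form of $y$ coming from the chosen $\theta_t$. The heavier work — bounding the expectation of $\mathcal{R}$ and of $\|\bar U\bar U^T p\|^2/\|p\|^2$ using the noise and signal statistics — is reserved for the proof of Theorem \ref{thm:eps_diff_exp}, not for this lemma.
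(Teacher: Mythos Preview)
Your proposal is correct and follows essentially the same route as the paper: derive the pointwise identity $\epsilon_t - \epsilon_{t+1} = \|\bar U^T y\|^2/\|y\|^2 - \|\bar U^T p\|^2/\|p\|^2$, identify $y$ (up to scale) with $v+\xi-\alpha r$ via the chosen $\theta_t$ and $p\perp r$, then use $(I-\bar U\bar U^T)v=0$ to rewrite the first term as $1-\mathcal{R}$. The only cosmetic difference is that the paper obtains the first identity by right-multiplying $U_t,U_{t+1}$ by the orthogonal matrix $W_t=[\,w/\|w\|\mid Z\,]$ so the rank-one correction sits in a single column, whereas you expand $\|\bar U^T U_{t+1}\|_F^2$ directly and collapse the cross term using $U_t w/\|w\|=p/\|p\|$; both computations are equivalent and equally short.
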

According to our definition of $\alpha$ in Section \ref{sec:step_size}, we can see that when $R(U_t)$ is not close to $R(\bar U)$, $1 - \alpha$ is large, as is $\|r\|^2 / \|p\|^2$. Therefore, Lemma \ref{lem:det_ratio_exp} implies that the expected convergence rate of the determinant similarity ($\zeta_t$) is faster in the first phase. For the Frobenius norm discrepancy ($\epsilon_t$), comparing to the noiseless case where $p = v_{\parallel}$, Lemma \ref{lem:epsdiff_expr} implies that we obtain monotonic expected decrease in Frobenius norm discrepancy as long as we are outside a ball centered on the true subspace. This ball shrinks as $\sigma^2 \to 0$, with no such constraint for $\sigma^2=0$. As we approach this ball, the expected convergence rate slows. 

\section{Numerical Results}
\label{sec:numerical}
With our plots we illustrate why the two analysis approaches allow us to prove rates in both phases of GROUSE. For each numerical result in this section, we initialize GROUSE with orthonormalized Gaussian matrices with entries iid $\mathcal{N}(0,1)$.  The underlying subspace of each trial is set to be a sparse subspace, as the range of an $n\times d$ matrix $\bar U$ with sparsity on the order of $\frac{\log(n)}{n}$. We generate the coefficient matrix $W$ with entries i.i.d $\mathcal{N}(0,1)$.  For the noisy case, we then normalize the columns of the underlying matrix $\bar U W$ and add a noise matrix $N$, with $N_{ij} \sim \mathcal{N}(0,\sigma^2/n)$. In the noisy case, we run GROUSE with the step size described in Equation (\ref{eq:alpha_size}), where we set $c$ to its expected value $1$. 

\setlength\figurewidth{0.90\columnwidth} 
\setlength\figureheight{0.30 \columnwidth} 
\begin{figure}[tb]\scriptsize
	\begin{center}
		\ifbool{DrawFigures}{
			\input{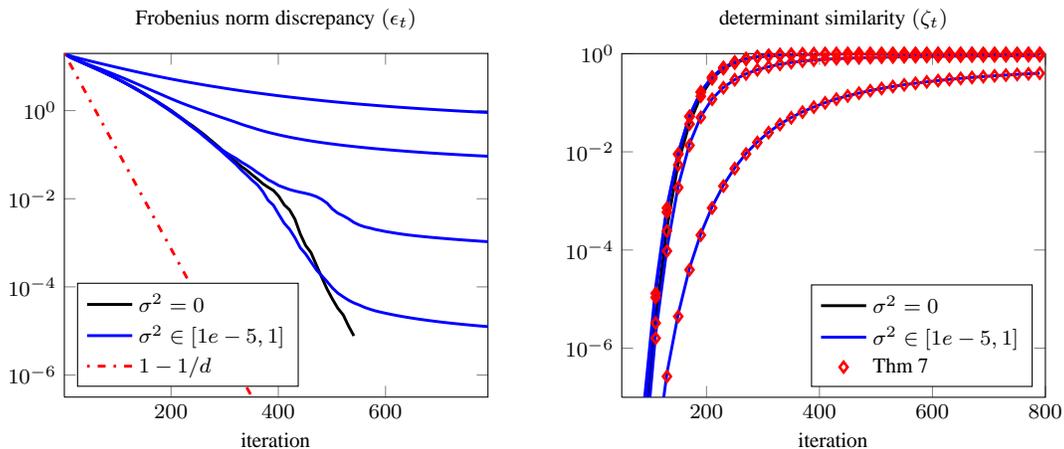}
		}{
			\textcolor{green}{\rule{\figurewidth}{\figureheight}}
		}
	\end{center}
	\caption{Illustration of expected convergence bounds given by Theorem \ref{thm:eps_diff_exp} (left) and Theorem \ref{thm:detratio_exp} (right) over $100$ trials. In this simulation, $n = 2000$, $d = 20$ and $\sigma^2 \in \left\{0,1e-5,1e-3,1e-1,1\right\}$. The red dashed line indicates the linear convergence rate, while the diamonds denote the lower bound on expected convergence rate in Theorem \ref{thm:detratio_exp}. We can see that the convergence rate of each phase slows down in the noisy case. However, when $\sigma^2$ is small, the convergence behavior of GROUSE similar to that of the noise-free case. We get a faster convergence rate of $\zeta_t$ in the initial phase an almost linear convergence of $\epsilon_t$ in the local region of $R(\bar U)$.}
	\label{fig:deteps_rate}
\end{figure}

As is demonstrated in Figure \ref{fig:deteps_rate}, when there is no noise in the observations or the SNR is large enough, the determinant similarity ($\zeta_t$) increases quickly in the initial phase, while the Frobenius norm discrepancy ($\epsilon_t$) decreases slowly. Then in a local region of the true subspace, our accurate bound on the fast convergence of the Frobenius norm discrepancy takes over. However, if the SNR is small, the convergence rate of the Frobenius norm discrepancy slows down; in this scenario we only study the convergence of GROUSE in terms of the determinant similarity. In Figure \ref{fig:deteps_rate}, we show that the convergence rate of determinant similarity will also slow down as we increase the magnitude of $\sigma^2$, however, the convergence rate described in Theorem \ref{thm:detratio_exp} is still tight. This allows us to obtain a good enough approximation of the number of iterations required to reach a ball around $R(\bar U)$, which is captured by $K_1$ alone in this case.
\setlength\figurewidth{0.80\columnwidth} 
\setlength\figureheight{0.65 \columnwidth} 
\begin{figure}[H]\scriptsize
	\begin{center}
		\ifbool{DrawFigures}{
			\input{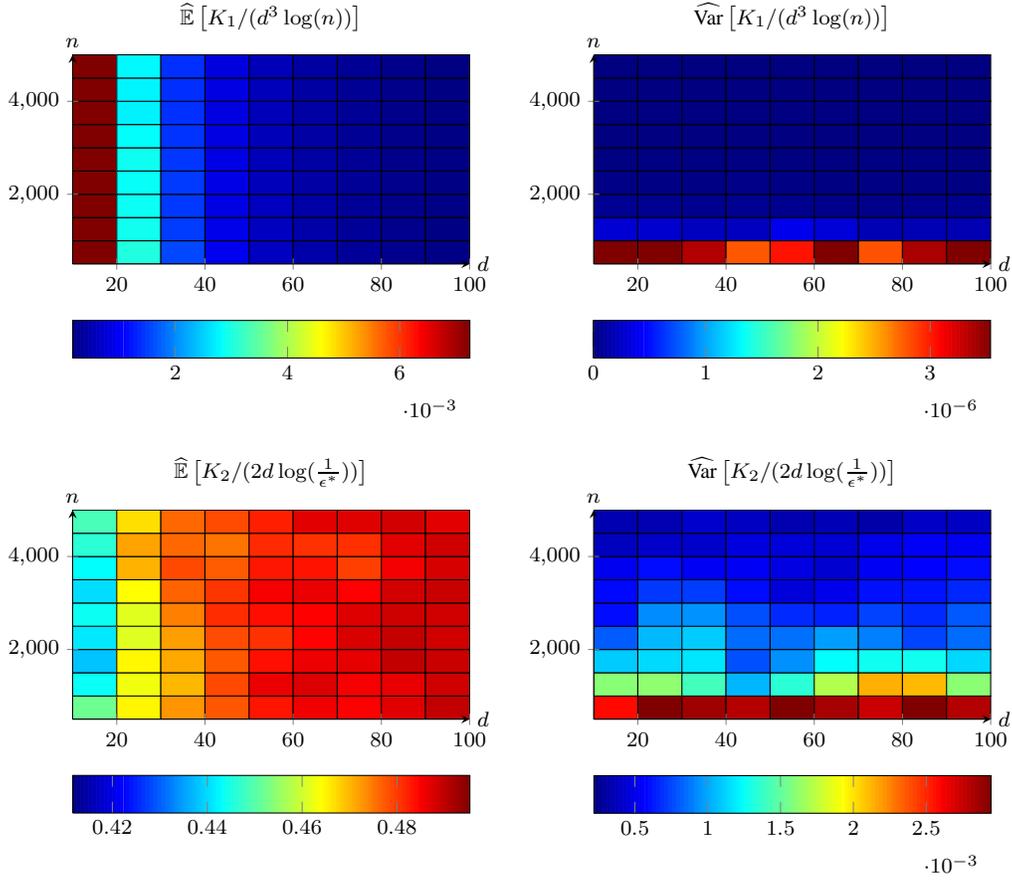}
		}{
			\textcolor{green}{\rule{\figurewidth}{\figureheight}}
		}
	\end{center}
	\caption{Illustration of the bounds on $K_1$ and $K_2$ compared to their values in practice, averaged over $50$ trials with different $n$ and $d$. We show the ratio of $K_1$ to the bound $d^3\log(n)$ in the initial phase (left)
and the ratio of $K_2$ to the bound $d \log(1/\epsilon^*)$ in the local phase (right).}
\label{fig:Knfree}
\end{figure}
We next examine the tightness of our theoretical values of $K_1$ and $K_2$ for noiseless convergence in Figure~\ref{fig:Knfree}. We run GROUSE to convergence for a required accuracy $\epsilon^* = 1e-4$ and divide the iterations into $K_1$, the number to reach $\zeta_t > \frac{1}{2}$, and $K_2$, the remaining number to reach $\epsilon_t < \epsilon^*$. We show the ratio of $K_1$ to the bound $d^3\log(n)$ in the initial phase (top plot) and the ratio of $K_2$ to the bound $d \log(1/\epsilon^*)$ in the local phase (bottom plot). We run $50$ trials and show the mean and variance. We can see that the value for $K_1$ is very loose. On the other hand, the value for $K_2$ is very accurate; $O(d \log(1/\epsilon^*))$ iterations are required to get to accuracy $\epsilon^*$. 

Finally, we examine the tightness of approximated $K_1$ and $K_2$ for the noisy case in Fig \ref{fig:K12_noise}. As we mentioned in Section \ref{sec:conv_noise}, for small SNR (large $\sigma^2$), the necessary number of iterations to achieve the given accuracy should be roughly $d$ times that required by the noise-free case, while for large SNR (small $\sigma^2$), this ratio would be less.  For large SNR, we first run GROUSE to reach the local region of the true subspace, \ie $\zeta_{K_1} \geq \frac{1}{2}$, and record $K_1$; from this point we run GROUSE to converge to $\epsilon^{\ast} = \tau_1\frac{d^2}{n}\sigma^2$ and then record $K_2$ and compare it with that required by the noise-free case.  For small SNR (large $\sigma^2$), we only numerically examine the convergence rate of the first phase, \ie necessary iterations to achieve the given accuracy $\zeta_{K_1} \geq \left(1 - \tau_2\frac{d}{n}\sigma^2\right)^{d} \approx e^{-\tau_2 d^2\sigma^2/n}$. As we can see in Figure \ref{fig:K12_noise}, we test $K_1$ versus $O(d^3\log(n))$, and as in noiseless case the bound on $K_1$ is loose. For small noise, the bound on $K_2$ is tight and stable. 

\setlength\figurewidth{0.80\columnwidth} 
\setlength\figureheight{0.60 \columnwidth} 
\begin{figure}[H]\scriptsize
	\begin{center}
		\ifbool{DrawFigures}{
			\input{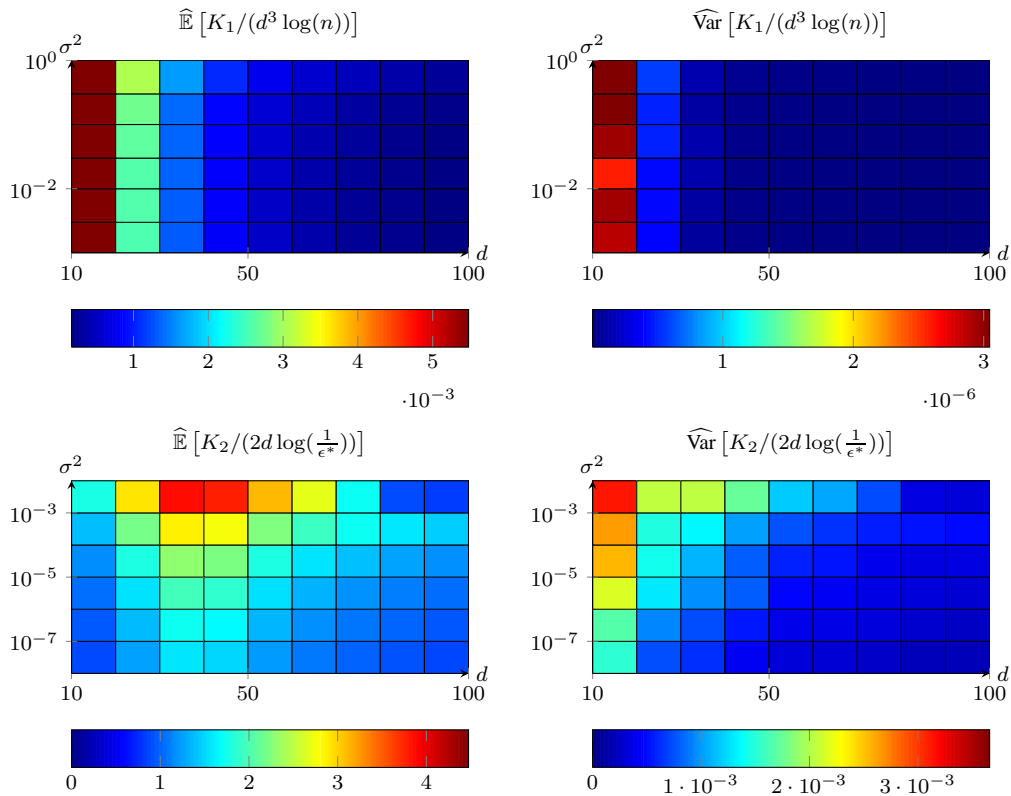}
		}{
			\textcolor{green}{\rule{\figurewidth}{\figureheight}}
		}
	\end{center}
	\caption{Illustration of the bounds on $K_1,K_2$ over $d$ and $\sigma$ with fixed $n$, averaged over $50$ trials. In this simulation, $n = 5000$ with $d$ ranging from $10$ to $100$ and $\sigma^2$ chosen from $1e$-3 to $1$ (above) and $1e$-8 to $1e-2$ (below).  We run GROUSE to converge to $\min\left\{1/2, e^{- \tau_2 d^{2}/n}\right\}$ and record the corresponding $K_1$. For large SNR (small $\sigma^2$) we first run GROUSE to converge to $1/2$ and record $K_1$, then let GROUSE further converge to $\epsilon^{\ast} = \max\left\{\sigma^2, \tau_1\frac{d^2}{n}\sigma^2\right\}$ and record $K$ that gives $K_2 = K - K_1$. For this plot we set both $\tau_1, \tau_2$ be $\log(d)$.}
	\label{fig:K12_noise}
\end{figure}

\section{Conclusion}
This paper has provided the first global convergence result for an incremental gradient descent method on the Grassmannian for noise-free data. For optimizing a particular cost function \eqref{eq:cost} in the noiseless case, we showed that the gradient algorithm converges from any random initialization to the global minimizer. Our novel analysis shows the convergence happens in two phases: the initial convergence and the local convergence. In the initial phase, we provided a very loose bound on the number of iterations $K_1 = O(d^3\log(n)/\rho')$ required to get to a local region of the global minimizer from the random initialization with probability $1-\rho'$. In fact, this phase usually takes many fewer iterations and reaches the local region in all empirical trials. In the local phase for the noiseless case, we provided a very tight bound for the required iterations $K_2 = O(d \log(1/\epsilon^*)$ to achieve a final desired accuracy of $\epsilon^*$. 

When the observations contain noise, we establish a rate of expected improvement of both of our metrics $\zeta_t$ and $\epsilon_t$ for all iterations $t$. Establishing the global convergence result remains as future work.

\bibliographystyle{apalike} 
\bibliography{grouse_noise}

\begin{thebibliography}{26}
\providecommand{\natexlab}[1]{#1}
\providecommand{\url}[1]{\texttt{#1}}
\expandafter\ifx\csname urlstyle\endcsname\relax
  \providecommand{\doi}[1]{doi: #1}\else
  \providecommand{\doi}{doi: \begingroup \urlstyle{rm}\Url}\fi

\bibitem[Absil et~al.(2009)Absil, Mahony, and Sepulchre]{absil2009optimization}
P-A Absil, Robert Mahony, and Rodolphe Sepulchre.
\newblock \emph{Optimization algorithms on matrix manifolds}.
\newblock Princeton University Press, 2009.

\bibitem[Armentano et~al.(2014)Armentano, Beltr{\'a}n, and
  Shub]{armentano2014average}
Diego Armentano, Carlos Beltr{\'a}n, and Michael Shub.
\newblock Average polynomial time for eigenvector computations.
\newblock \emph{arXiv preprint arXiv:1410.2179}, 2014.

\bibitem[Arora et~al.(2013)Arora, Cotter, and Srebro]{arora2013stochastic}
Raman Arora, Andy Cotter, and Nati Srebro.
\newblock Stochastic optimization of {PCA} with capped {MSG}.
\newblock In \emph{Advances in Neural Information Processing Systems}, pages
  1815--1823, 2013.

\bibitem[Balsubramani et~al.(2013)Balsubramani, Dasgupta, and
  Freund]{balsubramani2013fast}
Akshay Balsubramani, Sanjoy Dasgupta, and Yoav Freund.
\newblock The fast convergence of incremental {PCA}.
\newblock In \emph{Advances in Neural Information Processing Systems}, pages
  3174--3182, 2013.

\bibitem[Balzano and Wright(2014)]{balzano2014local}
Laura Balzano and Stephen~J Wright.
\newblock Local convergence of an algorithm for subspace identification from
  partial data.
\newblock \emph{Foundations of Computational Mathematics}, pages 1--36, 2014.

\bibitem[Balzano et~al.(2010)Balzano, Nowak, and Recht]{balzano2010online}
Laura Balzano, Robert Nowak, and Benjamin Recht.
\newblock Online identification and tracking of subspaces from highly
  incomplete information.
\newblock In \emph{Communication, Control, and Computing (Allerton), 2010 48th
  Annual Allerton Conference on}, pages 704--711. IEEE, 2010.

\bibitem[Balzano(2012)]{balzano2012handling}
Laura~Kathryn Balzano.
\newblock \emph{Handling missing data in high-dimensional subspace modeling}.
\newblock PhD thesis, University of Wisconsin -- Madison, 2012.

\bibitem[Bhojanapalli et~al.(2015)Bhojanapalli, Kyrillidis, and
  Sanghavi]{bhojanapalli2015dropping}
Srinadh Bhojanapalli, Anastasios Kyrillidis, and Sujay Sanghavi.
\newblock Dropping convexity for faster semi-definite optimization.
\newblock \emph{arXiv preprint arXiv:1509.03917}, 2015.

\bibitem[Brooks et~al.(2013)Brooks, Dul{\'a}, and Boone]{brooks2013pure}
J~Paul Brooks, JH~Dul{\'a}, and Edward~L Boone.
\newblock A pure l1-norm principal component analysis.
\newblock \emph{Computational statistics \& data analysis}, 61:\penalty0
  83--98, 2013.

\bibitem[Cand{\`e}s et~al.(2011)Cand{\`e}s, Li, Ma, and
  Wright]{candes2011robust}
Emmanuel~J Cand{\`e}s, Xiaodong Li, Yi~Ma, and John Wright.
\newblock Robust principal component analysis?
\newblock \emph{Journal of the ACM (JACM)}, 58\penalty0 (3):\penalty0 11, 2011.

\bibitem[Chen and Wainwright(2015)]{chen2015fast}
Yudong Chen and Martin~J Wainwright.
\newblock Fast low-rank estimation by projected gradient descent: General
  statistical and algorithmic guarantees.
\newblock \emph{arXiv preprint arXiv:1509.03025}, 2015.

\bibitem[d'Aspremont et~al.(2008)d'Aspremont, Bach, and Ghaoui]{d2008optimal}
Alexandre d'Aspremont, Francis Bach, and Laurent~El Ghaoui.
\newblock Optimal solutions for sparse principal component analysis.
\newblock \emph{The Journal of Machine Learning Research}, 9:\penalty0
  1269--1294, 2008.

\bibitem[De~Sa et~al.(2014)De~Sa, Olukotun, and R{\'e}]{de2014global}
Christopher De~Sa, Kunle Olukotun, and Christopher R{\'e}.
\newblock Global convergence of stochastic gradient descent for some nonconvex
  matrix problems.
\newblock \emph{arXiv preprint arXiv:1411.1134}, 2014.

\bibitem[Edelman et~al.(1998)Edelman, Arias, and Smith]{edelman1998geometry}
Alan Edelman, Tom{\'a}s~A Arias, and Steven~T Smith.
\newblock The geometry of algorithms with orthogonality constraints.
\newblock \emph{SIAM journal on Matrix Analysis and Applications}, 20\penalty0
  (2):\penalty0 303--353, 1998.

\bibitem[Golub and Van~Loan(2012)]{golub2012matrix}
Gene~H Golub and Charles~F Van~Loan.
\newblock \emph{Matrix computations}.
\newblock JHU Press, 4 edition, 2012.

\bibitem[He et~al.(2012)He, Balzano, and Szlam]{he2012incremental}
Jun He, Laura Balzano, and Arthur Szlam.
\newblock Incremental gradient on the grassmannian for online foreground and
  background separation in subsampled video.
\newblock In \emph{IEEE CVPR}, June 2012.

\bibitem[Jain et~al.(2013)Jain, Netrapalli, and Sanghavi]{jain2013low}
Prateek Jain, Praneeth Netrapalli, and Sujay Sanghavi.
\newblock Low-rank matrix completion using alternating minimization.
\newblock In \emph{Proceedings of the forty-fifth annual ACM symposium on
  Theory of computing}, pages 665--674. ACM, 2013.

\bibitem[Keshavan et~al.(2010)Keshavan, Montanari, and Oh]{keshavan2010matrix}
Raghunandan~H Keshavan, Andrea Montanari, and Sewoong Oh.
\newblock Matrix completion from a few entries.
\newblock \emph{Information Theory, IEEE Transactions on}, 56\penalty0
  (6):\penalty0 2980--2998, 2010.

\bibitem[Ngo and Saad(2012)]{ngo2012scaled}
Thanh Ngo and Yousef Saad.
\newblock Scaled gradients on grassmann manifolds for matrix completion.
\newblock In \emph{Advances in Neural Information Processing Systems}, pages
  1412--1420, 2012.

\bibitem[Nguyen et~al.(2014)Nguyen, Vu, et~al.]{nguyen2014random}
Hoi~H Nguyen, Van Vu, et~al.
\newblock Random matrices: Law of the determinant.
\newblock \emph{The Annals of Probability}, 42\penalty0 (1):\penalty0 146--167,
  2014.

\bibitem[Recht et~al.(2010)Recht, Fazel, and Parrilo]{recht2010guaranteed}
Benjamin Recht, Maryam Fazel, and Pablo~A Parrilo.
\newblock Guaranteed minimum-rank solutions of linear matrix equations via
  nuclear norm minimization.
\newblock \emph{SIAM review}, 52\penalty0 (3):\penalty0 471--501, 2010.

\bibitem[R.H.Keshavan(2012)]{RH2012}
R.H.Keshavan.
\newblock \emph{Efficient algorithms for collaborative filtering}.
\newblock PhD thesis, Stanford University, 2012.

\bibitem[Richt{\'a}rik and Tak{\'a}{\v{c}}(2014)]{richtarik2014iteration}
Peter Richt{\'a}rik and Martin Tak{\'a}{\v{c}}.
\newblock Iteration complexity of randomized block-coordinate descent methods
  for minimizing a composite function.
\newblock \emph{Mathematical Programming}, 144\penalty0 (1-2):\penalty0 1--38,
  2014.

\bibitem[Stewart and Sun(1990)]{stewart1990matrix}
Gilbert~W Stewart and Ji-guang Sun.
\newblock \emph{Matrix perturbation theory}.
\newblock Academic press, 1990.

\bibitem[Xu et~al.(2010)Xu, Caramanis, and Sanghavi]{xu2010robust}
Huan Xu, Constantine Caramanis, and Sujay Sanghavi.
\newblock Robust {PCA} via outlier pursuit.
\newblock In \emph{Advances in Neural Information Processing Systems}, pages
  2496--2504, 2010.

\bibitem[Zheng and Lafferty(2015)]{zheng2015convergent}
Qinqing Zheng and John Lafferty.
\newblock A convergent gradient descent algorithm for rank minimization and
  semidefinite programming from random linear measurements.
\newblock In \emph{Advances in Neural Information Processing Systems}, pages
  109--117, 2015.

\end{thebibliography}

\newpage
\onecolumn
\appendix
\section*{Appendix} 
\label{sec:appendix}
We first call out the following definitions that will be used frequently throughout the whole proof. For convenience, we will drop the subscript of all terms except $\epsilon_t$ and and $\zeta_t$ hereafter.  
\begin{definition}
	Let $v_{\parallel}, v_{\perp}$ denote the projection and residual of $v$ onto $R(U)$, \ie $v_{\parallel} = UU^T v$ and $v_{\perp} = v - v_{\parallel} = (I - UU^T)v$, and similarly let $\xi_{\parallel} = UU^T\xi$ and $\xi_{\perp} = \xi - \xi_{\parallel} = (I - UU^T)\xi$. It follows that, 
\begin{equation}
   p = v_{\parallel} + \xi_{\parallel} \qquad \text{and}\qquad r = v_{\perp} + \xi_{\perp}  \nn
\end{equation}
Define $w_1$ and $w_2$ as 
\begin{equation}
	w_1 =  U^T  v = U^T v_{\parallel}\qquad \text{and}\qquad w_2 = U^T\xi = U^T \xi_{\parallel} \nn
\end{equation}
\label{defn:paraperp}
\end{definition}

\section{Preliminaries} 
\label{sec:appendix_a}
We start by providing the following lemma that we will use regularly in the manipulation of the matrix $\bar{U}^T U$, which is relevant for both our metrics of discrepancy and similarity between the subspaces. The proof can be found in \cite{stewart1990matrix}. 
\begin{lemma}[\cite{stewart1990matrix}, Theorem 5.2]
  There are unitary matrices $Q$, $\bar{Y}$, and $Y$ such that 
\begin{equation}
	Q\bar{U}\bar{Y} := \bordermatrix{&d\cr
                d& I \cr
               d & 0 \cr
                n-2d & 0 } = \Lambda_1, 
	\quad
	Q U Y := \bordermatrix{&d\cr
                d& \Gamma \cr
               d & \Sigma \cr
                n-2d & 0 } = \Lambda_2,   \nonumber
\end{equation}
where $\Gamma = \diag{(\cos\phi_{t,1}, \dots, \cos\phi_{t,d})}, \Sigma = \diag{(\sin\phi_{t,1}, \dots, \sin\phi_{t,d})}$ with $\phi_{t, i}$ being the $i^{th}$ principal angle between $R(U)$ and $R(\bar U)$ defined in Definition \ref{defn:angles}.
\label{lem:subsprelate}
\end{lemma}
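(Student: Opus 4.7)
The plan is to construct the unitary matrices directly from the SVD of the small matrix $\bar U^T U \in \R^{d\times d}$, which by Definition \ref{defn:angles} has singular values $\cos\phi_{t,1},\ldots,\cos\phi_{t,d}$. So first I would write the SVD
\[
\bar U^T U = \bar Y\,\Gamma\,Y^T,\qquad \Gamma = \diag(\cos\phi_{t,1},\ldots,\cos\phi_{t,d}),
\]
with $\bar Y, Y \in \R^{d\times d}$ orthogonal. This immediately supplies two of the three requested matrices, and sets $\Sigma = \diag(\sin\phi_{t,1},\ldots,\sin\phi_{t,d})$.

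Next I would form the auxiliary $n\times d$ matrices $X_1 = \bar U\bar Y$ and $X_2 = UY$. Both have orthonormal columns (each is a product of matrices with orthonormal columns), and by construction $X_1^T X_2 = \bar Y^T\bar U^T U\,Y = \Gamma$. This is the crucial inner-product relation that already enforces the principal-angle structure. The idea is now to build $Q$ whose rows are chosen so that $X_1$ lives in the span of the first $d$ coordinates while $X_2$ splits into a $\Gamma$-part in that same span plus a $\Sigma$-part in a complementary $d$-dimensional subspace.

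To produce the complementary block, assume first that all $\sin\phi_{t,i}>0$ and define $Z := (X_2 - X_1\Gamma)\Sigma^{-1}$. A direct calculation gives $Z^T X_1 = \Sigma^{-1}(X_2^T X_1 - \Gamma X_1^T X_1) = \Sigma^{-1}(\Gamma - \Gamma) = 0$ and $Z^T Z = \Sigma^{-1}(X_2^T X_2 - \Gamma^2)\Sigma^{-1} = \Sigma^{-1}(I - \Gamma^2)\Sigma^{-1} = I$, so the columns of $Z$ are orthonormal and orthogonal to those of $X_1$. Extend $[X_1,Z]\in\R^{n\times 2d}$ to an orthonormal basis by appending any matrix $X_3\in\R^{n\times(n-2d)}$ whose columns span the orthogonal complement, and set $Q^T := [X_1,Z,X_3]$. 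Then $Q$ is orthogonal, and block-multiplying yields $Q X_1 = \Lambda_1$ and $Q X_2 = \Lambda_2$ exactly as claimed, since the top block produces $X_1^T X_1 = I$ and $X_1^T X_2 = \Gamma$, the middle block produces $Z^T X_1 = 0$ and $Z^T X_2 = \Sigma^{-1}(I - \Gamma^2) = \Sigma$, and the bottom block annihilates both.

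The main obstacle is purely the degenerate case where some $\sin\phi_{t,i}=0$, i.e.\ when $R(U)$ and $R(\bar U)$ share a common direction and $\Sigma$ is not invertible. There the column $(X_2-X_1\Gamma)e_i$ is identically zero, so the formula for $Z$ breaks down. I would handle this column-by-column: for every $i$ with $\sin\phi_{t,i}>0$ set the $i$-th column of $Z$ as above, and for each $i$ with $\sin\phi_{t,i}=0$ insert instead any unit vector orthogonal to the previously chosen columns and to $X_1$ (which is possible since $n\geq 2d$ is implicit in the statement via the three block rows). The verification $Z^T X_1 = 0$, $Z^T Z = I$, $Z^T X_2 = \Sigma$ then goes through column by column, and the rest of the construction is unchanged.
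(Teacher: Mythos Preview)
Your construction is correct and is precisely the standard CS-decomposition argument: take the SVD of $\bar U^T U$ to obtain $\bar Y,\Gamma,Y$, set $X_1=\bar U\bar Y$, $X_2=UY$, build the complementary block $Z=(X_2-X_1\Gamma)\Sigma^{-1}$, and complete to an orthonormal basis. The verifications you give for $Z^TX_1=0$, $Z^TZ=I$, $Z^TX_2=\Sigma$, and $X_3^TX_2=0$ (the last via $X_2=X_1\Gamma+Z\Sigma$) are all sound, and your column-by-column patch for the degenerate indices with $\sin\phi_{t,i}=0$ works because in that case $X_2e_i=X_1e_i$ by the equality case of Cauchy--Schwarz.

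The paper itself does not supply a proof of this lemma; it simply cites \cite{stewart1990matrix}, Theorem~5.2. Your argument is essentially the textbook proof found there, so there is nothing to compare beyond noting that you have reproduced it faithfully. The only implicit assumption worth flagging is $n\ge 2d$, which you correctly observe is built into the block structure of the stated $\Lambda_1,\Lambda_2$.
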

In words, there are unitary matrices $Q$, $\bar{Y}$, and $Y$ such that $\bar U = Q^T \Lambda_1 \bar Y^T$ and $U =Q^T \Lambda_2 Y^T$. Letting $B = \bar U^T UU^T\bar U$, we call out the following simplified quantities for future reference :
\begin{equation}
 	B = \bar Y \Gamma^2  \bar Y^T  \quad \text{and} \quad B^T B = \bar Y \Gamma^{4} \bar Y^T \;. 
 	\label{subrelate3}
 \end{equation}

Next we present the following two lemmas that are for us to relate the projection ($v_{\parallel}$) and residual ($v_{\perp}$) to both of our metrics $\epsilon_t$ and $\zeta_t$. The proofs can be found in Appendix \ref{sec:proof_of_concentration_results}. 
\begin{lemma}[\cite{balzano2014local}, Lemma 2.12]
	Given any matrix $Q\in \R^{d\times d}$ suppose that $x \in \R^{d}$ is a random vector with entries identically distributed, zero-mean, and uncorrelated, then 
	\begin{equation}
		\mathbb{E}\left[\frac{x^T Q x}{x^T x}\right] = \frac{1}{d}\trace(Q) \nonumber
	\end{equation}
	\label{lem:expisovec}
\end{lemma}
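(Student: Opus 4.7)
The plan is to decompose the quadratic form into a sum over coordinates and exploit the symmetry of the coordinate distribution of $x$. First I would write
\begin{equation*}
\frac{x^T Q x}{x^T x} = \sum_{i=1}^d Q_{ii}\,\frac{x_i^2}{\|x\|^2} + \sum_{i\neq j} Q_{ij}\,\frac{x_i x_j}{\|x\|^2},
\end{equation*}
which by linearity of expectation reduces the lemma to computing $\mathbb{E}[x_i^2/\|x\|^2]$ and $\mathbb{E}[x_i x_j/\|x\|^2]$ for $i\neq j$.

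For the diagonal terms, I would exploit exchangeability of the coordinates: since the entries of $x$ are identically distributed (and, in the applications in this paper, independent), the random variable $x_i^2/\|x\|^2$ has the same distribution for every $i$, so its expectation $m$ is independent of $i$. Summing over $i$ then yields $d\,m = \mathbb{E}\!\left[\|x\|^2/\|x\|^2\right]=1$, giving $m = 1/d$.

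For the off-diagonal terms ($i\neq j$), I would use a sign-flip argument: under the coordinate map $x_i \mapsto -x_i$ (holding all other coordinates fixed), the joint law of $x$ is invariant because the marginal of $x_i$ is symmetric about zero (the relevant setting in this paper), the denominator $\|x\|^2$ is unchanged, and the numerator $x_i x_j$ reverses sign. Hence $\mathbb{E}[x_i x_j/\|x\|^2]=-\mathbb{E}[x_i x_j/\|x\|^2]=0$. Combining the two cases gives $\mathbb{E}[x^T Q x/x^T x] = \tfrac{1}{d}\sum_i Q_{ii} = \tfrac{1}{d}\trace(Q)$, as claimed.

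The main obstacle is really a bookkeeping issue about the symmetry assumptions: ``identically distributed, zero-mean, and uncorrelated'' by itself does not automatically give the exchangeability needed in the diagonal step nor the sign-flip invariance needed in the off-diagonal step, but in the random vectors actually used in the paper (isotropic coefficient vectors $s_t$ and Gaussian noise $\xi_t$) these stronger symmetries do hold, so the argument goes through. A cleaner packaging is to note that $\tfrac{x^T Q x}{x^T x}=\trace\!\bigl(Q\,\tfrac{xx^T}{x^T x}\bigr)$, deduce from the above symmetries that $M:=\mathbb{E}[xx^T/x^T x]=\tfrac{1}{d}I_d$, and then conclude via $\mathbb{E}[x^T Q x/x^T x]=\trace(QM)=\tfrac{1}{d}\trace(Q)$.
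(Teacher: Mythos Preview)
Your proposal is correct and follows essentially the same approach as the paper: decompose $x^TQx/x^Tx$ into diagonal and off-diagonal sums, use that the $x_i^2/\|x\|^2$ are identically distributed and sum to $1$ to get $\mathbb{E}[x_i^2/\|x\|^2]=1/d$, and argue the cross terms vanish. Your remark that the stated hypotheses (identically distributed, zero-mean, uncorrelated) do not by themselves guarantee the needed symmetry is well taken---the paper simply asserts $\mathbb{E}[x_ix_j/\|x\|^2]=0$ without further comment, and indeed relies on the stronger i.i.d./symmetric structure present in its applications.
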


\begin{lemma}
Let $X = [X_1, \cdots, X_d]$ with $X_i \in [0,1], i = 1, \dots, d$, then 
\begin{align}
	 d - \sum_{i = 1}^{d}X_i \geq  1 - \Pi_{i = 1}^{d}X_i \nn 
\end{align}
Given $\Pi_{i = 1}^{d} X_i \geq \frac{1}{2}$, we have
\begin{equation}
   d - \sum_{i = 1}^{d}X_i \leq 2\left(1 - \Pi_{i = 1}^{d}X_i\right) \nn
\end{equation}
	\label{lem:zeta_eps_relate}
\end{lemma}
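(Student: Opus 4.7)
The plan is to transform both inequalities via the change of variables $y_i := 1 - X_i \in [0,1]$, which recasts the two claims as:
(i) $\sum_{i=1}^d y_i \geq 1 - \prod_{i=1}^d(1-y_i)$, and
(ii) if $\prod_{i=1}^d(1-y_i) \geq 1/2$, then $\sum_{i=1}^d y_i \leq 2\bigl(1 - \prod_{i=1}^d(1-y_i)\bigr)$.
Both follow from elementary one-variable calculus facts combined with either induction or the $1-y \leq e^{-y}$ trick, so the argument is largely routine; the content is entirely in the constant $2$ showing up in (ii) and its tie to the hypothesis $q \geq 1/2$.

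For (i), I would argue by induction on $d$. The base case $d=1$ is equality. For the inductive step, let $P_{d-1} := \prod_{i=1}^{d-1}(1-y_i) \in [0,1]$, so that
\begin{equation}
1 - \prod_{i=1}^d(1-y_i) \;=\; 1 - (1-y_d)P_{d-1} \;=\; (1 - P_{d-1}) + y_d P_{d-1}. \nonumber
\end{equation}
The induction hypothesis gives $\sum_{i=1}^{d-1} y_i \geq 1 - P_{d-1}$; adding $y_d$ to both sides and using $y_d \geq y_d P_{d-1}$ (valid since $P_{d-1} \leq 1$ and $y_d \geq 0$) closes the induction. This is the classical Weierstrass inequality, equivalent to the union bound for independent events with success probabilities $y_i$.

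For (ii), I would invoke the elementary bound $1-y \leq e^{-y}$ for $y \geq 0$. Taking the product over $i$ and then logarithms yields $\sum_{i=1}^d y_i \leq -\log \prod_{i=1}^d(1-y_i)$. Setting $q := \prod_{i=1}^d(1-y_i) \in [1/2,1]$, it suffices to establish $-\log q \leq 2(1-q)$ on that interval. Define $h(q) := \log q - 2(q-1)$; then $h(1) = 0$ and $h'(q) = 1/q - 2 \leq 0$ on $[1/2,1]$, so $h$ is nonincreasing on $[1/2,1]$ and hence $h(q) \geq h(1) = 0$ there, which rearranges to the desired inequality.

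The only real subtlety is why the constant $2$ in (ii) is the right one: the linear upper bound $2(1-q)$ on $-\log q$ is tight as $q \to 1^-$ (both sides and their derivatives match) but diverges in quality as $q \to 0$, so one needs a lower cutoff on $q$; calibrating at the endpoint $q=1/2$ gives $\log 2 < 1$, which is precisely why the hypothesis $\prod_i X_i \geq 1/2$ yields the constant $2$. This is the step I would highlight as the conceptual heart of the lemma, while the remaining manipulations are standard.
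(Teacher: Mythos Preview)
Your proof is correct but takes a genuinely different route from the paper's. The paper treats both claims with the same coordinate-wise monotonicity trick: for (i) it sets $f_1(X)=d-1-\sum_i X_i+\prod_i X_i$, notes $\partial f_1/\partial X_i=-1+\prod_{j\neq i}X_j\le 0$ on $[0,1]^d$, and concludes $f_1(X)\ge f_1(\mathbf 1)=0$; for (ii) it sets $f_2(X)=2(1-\prod_i X_i)-d+\sum_i X_i$, uses $\prod_{j\neq i}X_j\ge \prod_j X_j\ge 1/2$ to get $\partial f_2/\partial X_i=1-2\prod_{j\neq i}X_j\le 0$, and again evaluates at $\mathbf 1$. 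This is shorter and more uniform than your argument. Your induction for (i) is the classical Weierstrass/union-bound proof and is equally elementary; your treatment of (ii) via $1-y\le e^{-y}$ collapses the multivariate problem to the single-variable inequality $-\log q\le 2(1-q)$ on $[1/2,1]$, which nicely isolates where the hypothesis $\prod_i X_i\ge 1/2$ enters. One minor inaccuracy in your commentary (not in the proof itself): the derivatives of $-\log q$ and $2(1-q)$ at $q=1$ are $-1$ and $-2$, so they do \emph{not} match; what actually drives the constant is that $\sup_{q\in[1/2,1)}\frac{-\log q}{1-q}=2\log 2<2$.
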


Now we are ready to present the following lemma that shows a relationship relating the projection ($v_{\parallel}$) and residual ($v_{\perp}$) to our metrics $\epsilon_t$ and $\zeta_t$ in terms of expectation. This is a central result used to obtain the expected convergence rates of both $\epsilon_t$ and $\zeta_t$. 
\begin{lemma}
Under Definition \ref{defn:paraperp}, we have the following
\begin{subequations}
	\begin{align}
		&\mathbb{E}\left[\frac{\|v_{\perp}\|^2}{\|v\|^2}\bigg\lvert U\right] = \frac{\epsilon_t}{d} \label{ineq1:vconc}\\
		&\mathbb{E}\left[\frac{\|v_{\perp}\|^2}{\|v\|^2}\bigg\lvert U\right] \geq \frac{1 - \zeta_t}{d} \label{ineq2:vconc} \\
		&\mathbb{E}\left[\frac{\left\|(I - \bar U\bar U^T)v_{\parallel}\right\|^2}{\|v\|^2}\bigg\lvert U\right] \geq \frac{\cos^2\phi_{d}}{d}\epsilon_t \label{ineq3:vconc}
	\end{align}
\end{subequations}
where $\phi_{d}$ is the largest principal angle between our estimated subspace $R(U_t)$ and the true subspace $R(\bar U)$ (Definition \ref{defn:angles}). 
	\label{lem:vconc}
\end{lemma}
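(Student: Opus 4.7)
The plan is to treat the three inequalities separately, with each one reducing by direct algebra to an application of Lemma \ref{lem:expisovec} on the isotropic random vector $s$ (recall $v = \bar{U}s$ with $\mathbb{E}s = 0$ and $\operatorname{Cov}(s) = I_d$). Throughout I will use the decomposition $B := \bar U^T UU^T \bar U = \bar Y \Gamma^2 \bar Y^T$ from Lemma \ref{lem:subsprelate}, and the identity $\|v\|^2 = s^T \bar U^T \bar U s = \|s\|^2$.

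For the equality \eqref{ineq1:vconc}, I would write $v_\perp = (I - UU^T)\bar U s$, so that
\begin{equation}
\|v_\perp\|^2 = s^T \bar U^T (I - UU^T) \bar U s = s^T (I - B) s. \nn
\end{equation}
Applying Lemma \ref{lem:expisovec} with $Q = I - B$ yields $\mathbb{E}\!\left[\|v_\perp\|^2/\|v\|^2 \,\big\lvert\, U\right] = \frac{1}{d}\operatorname{tr}(I - B) = \frac{1}{d}(d - \|\bar U^T U\|_F^2) = \epsilon_t/d$.

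For \eqref{ineq2:vconc}, I would invoke Lemma \ref{lem:zeta_eps_relate} with $X_i = \cos^2\phi_{t,i} \in [0,1]$: this gives $\epsilon_t = d - \sum_{i=1}^d \cos^2\phi_{t,i} \geq 1 - \prod_{i=1}^d \cos^2\phi_{t,i} = 1 - \zeta_t$, and combining with \eqref{ineq1:vconc} finishes this inequality.

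The third inequality is the main technical step. I would first simplify $(I - \bar U \bar U^T)v_\parallel = (I - \bar U\bar U^T) U U^T \bar U s$ and compute, using $U^T U = I_d$,
\begin{equation}
\bar U^T U U^T (I - \bar U \bar U^T) U U^T \bar U
= \bar U^T U U^T \bar U - (\bar U^T U U^T \bar U)^2 = B - B^2. \nn
\end{equation}
Hence $\|(I - \bar U \bar U^T) v_\parallel\|^2 = s^T (B - B^2) s$, and Lemma \ref{lem:expisovec} gives
\begin{equation}
\mathbb{E}\!\left[\frac{\|(I - \bar U\bar U^T)v_\parallel\|^2}{\|v\|^2}\,\bigg\lvert\, U\right]
= \frac{1}{d}\operatorname{tr}(B - B^2) = \frac{1}{d}\sum_{i=1}^d \cos^2\phi_{t,i}\,\sin^2\phi_{t,i}, \nn
\end{equation}
where the last equality uses $B - B^2 = \bar Y \Gamma^2(I - \Gamma^2) \bar Y^T$. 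Since $\phi_{t,d}$ is the \emph{largest} principal angle, $\cos^2\phi_{t,i} \geq \cos^2\phi_{t,d}$ for every $i$, so the sum is bounded below by $\cos^2\phi_{t,d}\sum_{i=1}^d \sin^2\phi_{t,i} = \cos^2\phi_{t,d}\,\epsilon_t$, which is the claim.

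The only mildly delicate point is the algebraic simplification in (3): one must recognize that the cross term $\bar U^T U U^T \bar U \bar U^T U U^T \bar U$ collapses to $B^2$ and that $U^T U U^T = U^T$ so that the first term equals $B$. Once these are in hand, everything reduces to Lemma \ref{lem:expisovec} plus the monotonicity $\cos^2\phi_{t,i}\geq \cos^2\phi_{t,d}$.
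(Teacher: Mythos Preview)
Your proposal is correct and follows essentially the same approach as the paper: both proofs reduce each expectation to a quadratic form in $s$ and apply Lemma~\ref{lem:expisovec}, using the diagonalization $B=\bar Y\Gamma^2\bar Y^T$ to read off traces, then Lemma~\ref{lem:zeta_eps_relate} for \eqref{ineq2:vconc} and the monotonicity $\cos^2\phi_{t,i}\geq\cos^2\phi_{t,d}$ for \eqref{ineq3:vconc}. The only cosmetic difference is that you route the third computation through $B-B^2$ while the paper writes $\bar Y\Gamma^2(I-\Gamma^2)\bar Y^T$ directly, but these are the same object.
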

\begin{proof}
	According to Lemma \ref{lem:expisovec}, we have 
	\begin{align}
		\mathbb{E}\left[\frac{\|v_{\perp}\|^2}{\|v\|^2} \bigg\lvert U\right] = \mathbb{E}\left[\frac{\|\bar U s\|^2 - \|UU^T\bar U s\|^2}{\|\bar U s\|^2}\bigg\lvert U\right] &\overset{\vartheta_1}= \mathbb{E}\left[\frac{s^T\bar Y(I - \Gamma^2)\bar Y^T s}{s^Ts}\bigg\lvert U\right] \nn \\
		&\overset{\vartheta_2}= \frac{1}{d}\trace\left(I - \Gamma^2\right) \nn \\
		&= \epsilon_t / d \nn 
	\end{align}
	where $\vartheta_1$ follows by $\|\bar U s\|^2 = \|s\|^2$ and $\vartheta_2$ follows from Lemma \ref{lem:expisovec}. 

	(\ref{ineq2:vconc}) is a direct result of Lemma \ref{lem:zeta_eps_relate} by setting $X = \textbf{diag}(\Gamma^2)$, \ie 
	\begin{equation}
		\mathbb{E}\left[\frac{\|v_{\perp}\|^2}{\|v\|^2}\bigg\lvert U\right] = \frac{1}{d}\trace(I - \Gamma^2) \geq \frac{1 - \zeta_t}{d}  \nn
 	\end{equation}

   Finally for (\ref{ineq3:vconc}), we have
    \begin{align}
    	\mathbb{E}\left[\frac{\left\|(I - \bar U\bar U^T)v_{\parallel}\right\|^2}{\|v\|^2}\bigg\lvert U\right] = \mathbb{E}\left[\frac{s^T\bar Y \Gamma^2(I - \Gamma^2) \bar Y^T s}{s^Ts}\bigg\lvert U\right] &= \frac{1}{d}\trace\left(\Gamma^2(I - \Gamma^2)\right) \nn \\
    	&\geq \cos^2\phi_{d}\trace\left(I - \Gamma^2\right)\cdot \frac{1}{d} \nn \\
    	&= \frac{\cos^2\phi_d}{d}\epsilon_t \nn 
    \end{align}
\end{proof}

\section{Proof of Main Results} 
\label{sec:proof_of_main_results}
\subsection{Noiseless Case}
In this section, given the results presented in Section \ref{sec:supportingresults} and Appendix \ref{sec:appendix_a}, we provide the proofs of our main results (Theorem \ref{thm:detconvgrate}, \ref{thm:localepsconvg}). 
\paragraph{Proof of Theorem \ref{thm:detconvgrate}}{
	\begin{proof}
		According to Lemma \ref{lem:mono}, $\zeta_t$ is an non-decreasing sequence. Therefore, there exists $T \geq 1$ such that $\zeta_t \leq 1 - \frac{\rho'}{d}$, $\forall t\leq T$ where $\rho' \in (0,1]$. Then Lemma \ref{lem:mono} together with Lemma \ref{lem:vconc} yield the following, for any $t \leq T$,
		\begin{align}
			\mathbb{E}_{s_t}\left[\frac{\zeta_{t + 1}}{\zeta_t} \bigg\lvert U\right] \geq 1 + \mathbb{E}_{s_t}\left[\frac{\|v_{\perp}\|^2}{\|v\|^2}\bigg \lvert U\right] &\geq 1 + \frac{1 - \zeta_t}{d} \geq  1 + \frac{\rho'}{d^2}\;.
		\end{align}
		It follows that
		\begin{equation}
			\mathbb{E}\left[\zeta_{t + 1} \big\lvert U\right] \geq \left(1 + \frac{\rho'}{d^2}\right) \zeta_t
		\end{equation}
		Taking expectation of both sides, we obtain the following
		\begin{equation}
			\mathbb{E}\left[\zeta_{t + 1}\right] \geq \left(1 + \frac{\rho'}{d^2}\right)\mathbb{E}[\zeta_t]
		\end{equation}
		Therefore after $K_1 \geq (d^2 / \rho' + 1)\log\frac{1 - \frac{\rho'}{2}}{\mathbb{E}[\zeta_0]}$ iterations of GROUSE we have 
		\begin{equation}
			\mathbb{E}\left[\zeta_{K_1}\right] \geq \left(1 + \frac{\rho'}{d^2}\right)^{K_1} \mathbb{E}[\zeta_0] \geq \left(\left(1 + \frac{\rho'}{d^2}\right)^{\frac{d^2}{\rho'} + 1}\right)^{\log\frac{1 - \frac{\rho'}{2}}{\mathbb{E}[\zeta_0]}} \mathbb{E}[\zeta_0] \geq \mathbb{E}[\zeta_0] e^{\log\frac{1 - \frac{\rho'}{2}}{\mathbb{E}[\zeta_0]}} = 1 - \frac{\rho'}{2}  \nn
		\end{equation}
		Therefore, 
		\begin{equation}
		    \mathbb{P}\left(\zeta_{K_1} \geq \frac{1}{2}\right) = 1 - \mathbb{P}\left(1 - \zeta_{K_1} \geq \frac{1}{2}\right) \overset{\vartheta_1}\geq 1 - \frac{\mathbb{E}[1 - \zeta_{K_1}]}{1/2} \geq 1 - \rho'  
		    \label{pf:K1_prob}
		\end{equation}
		where $\vartheta_1$ follows by applying Markov inequality to the nonnegative random variable $1 - \zeta_{K_1}$. If $T \leq K_1$, then (\ref{pf:K1_prob}) automatically holds. 
		Therefore, together with the following derived from Lemma \ref{lem:exp_zeta0}, we obtain $\log\left(\frac{1 - \rho'/2}{\mathbb{E}[\zeta_0]}\right) = \log\left(\frac{1 - \rho'/2}{C\left(\frac{d}{n e}\right)^d}\right) = \mu_0 d\log(n)$ where $\mu_0 = 1 + \frac{\log \frac{(1 - \rho')}{C} + d\log (e/d)}{d\log n}$.
	\end{proof}
}

\paragraph{Proof of Theorem \ref{thm:localepsconvg}}{
\begin{proof}
This proof follows the same reasoning as the proof of [Theorem 1, \cite{richtarik2014iteration}].

Given Lemma \ref{lem:mono} we have 
\begin{equation}
	\mathbb{E}\left[\epsilon_{t + 1}\big\lvert U\right] \overset{\vartheta_1}{\leq} \epsilon_t - \mathbb{E}\left(\frac{\|(I - UU^T)v_{\parallel}\|^2}{\|v_{\parallel}\|^2}\bigg\lvert U\right) \leq \epsilon_t - \mathbb{E}\left(\frac{\|(I - UU^T)v_{\parallel}\|^2}{\|v\|^2}\bigg\lvert U\right)\overset{\vartheta_2}\leq \left(1 - \frac{\cos^2\phi_{t,d}}{d}\right)\epsilon_t \nn 
\end{equation}
where $\vartheta_1$ holds since $UU^T$ and $I - UU^T$ are orthogonal complement of each other; and $\vartheta_2$ follows from Lemma \ref{lem:vconc}. 
Also note that Lemma \ref{lem:mono} implies that $\cos^{2}\phi_{t,d} \geq \zeta_{t} \geq \zeta_k \geq 1/2, \forall t \geq k$, therefore
$$\E\left[\epsilon_{t+1}\big\lvert U\right] \leq \epsilon_t \left( 1 - 1/2 d\right)\;.$$ 
Taking expectations of both sides and considering $\epsilon_t$ at $t={k+K_2}$ with $K_2\geq 2 d \log\left(1/\epsilon^{\ast}\rho\right)$, we have
\begin{align}
\E[\epsilon_{k+K_2}] \leq \epsilon_k\left(1 - \frac{1}{2 d}\right)^{K_2} \overset{\vartheta_3}\leq  \mathbb{E}\left[\epsilon_k\right] \left(1 - \frac{1}{2 d}\right)^{2 d\log\left(\mathbb{E}[\epsilon_k]/\epsilon^* \rho\right)} \leq \mathbb{E}[\epsilon_k] e^{-\log(\mathbb{E}[\epsilon_k] / \epsilon^* \rho)} = \epsilon^* \rho \nn
\end{align}
where $\vartheta_3$ follows by Lemma \ref{lem:zeta_eps_relate}, \ie given $\zeta_k \geq 1/2$ we have $\epsilon_k \leq 1$. Again using the Markov inequality for the nonnegative random variable $\epsilon_t$ we have $$\bP(\epsilon_{k+K_2} \leq \epsilon^*) = 1 - \bP(\epsilon_{k+K_2} \geq \epsilon^*) \geq 1-\frac{\E[\epsilon_{k+K_2}]}{\epsilon^*} \geq 1- \rho\;.$$
\end{proof}
}

\subsection{Noisy Case}
\label{subsec:eps_proof}
Before we prove the main results (Theorem \ref{thm:detratio_exp},\ref{thm:eps_diff_exp}) of the noisy case, we first call out the following lemmas that will be used frequently throughout the proofs in this section. The proofs are provided in Appendix \ref{sec:proof_of_concentration_results}. 
\begin{lemma}
Given Condition \ref{cond:fullnoise} is satisfied, we have 
	\begin{align}
		& \mathbb{E}\left[\|\xi_{\perp}\|^2\big\lvert v\right] \leq (1 - d/n)\sigma^2 \|v\|^2 &\mathbb{E}\left[\|\xi_{\parallel}\|^2\big\lvert v\right] \leq \frac{d}{n}\sigma^2 \|v\|^2  \nn \\
		& \mathbb{E}\left[\|p\|^2\big\lvert v\right] \leq \left(1 + \frac{d}{n}\sigma^2\right)\|v\|^2  &\mathbb{E}\left[\|r\|^2\big\lvert v\right]\leq \mathbb{E}\left[\|v_{\perp}\|^2 \big\lvert v\right] + \left(1 - \frac{d}{n}\right)\sigma^2\|v\|^2 \nn
	\end{align}
	\label{lem:rp_conc}
\end{lemma}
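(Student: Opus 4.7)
The plan is to exploit the isotropy of the noise: by Condition~\ref{cond:fullnoise}, the entries of $\xi_t$ are independent normal variables, so $\mathrm{Cov}(\xi_t) = \sigma_\xi^2 I_n$ for some $\sigma_\xi^2$, and in particular the expected squared length of its projection onto any fixed subspace depends only on the dimension of that subspace. Throughout I would condition on both $v_t$ and the current iterate $U_t$, using that $U_t$ is measurable with respect to the past data and is therefore independent of the fresh noise vector $\xi_t$; this is what allows the conditioning on $U_t$ to be left implicit in the notation of the lemma.

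First I would bound $\mathbb{E}\!\left[\|\xi_\parallel\|^2 \mid v\right]$ and $\mathbb{E}\!\left[\|\xi_\perp\|^2 \mid v\right]$ via the trace identity. Since $U_t U_t^T$ is an orthogonal projection of rank $d$, one has $\mathbb{E}\!\left[\xi_t^T U_t U_t^T \xi_t \mid v, U_t\right] = \sigma_\xi^2\, \trace(U_t U_t^T) = \sigma_\xi^2 d = (d/n)\,\mathbb{E}\!\left[\|\xi_t\|^2 \mid v, U_t\right]$, and symmetrically $\mathbb{E}\!\left[\|\xi_\perp\|^2 \mid v, U_t\right] = (1 - d/n)\,\mathbb{E}\!\left[\|\xi_t\|^2 \mid v, U_t\right]$ since $I - U_t U_t^T$ has rank $n - d$. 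Applying Condition~\ref{cond:fullnoise} in the form $\mathbb{E}\!\left[\|\xi_t\|^2 \mid v\right] \leq \sigma^2\|v\|^2$ then yields the first two claimed bounds.

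Second, the bounds on $\|p\|^2$ and $\|r\|^2$ follow by writing $p = v_\parallel + \xi_\parallel$ and $r = v_\perp + \xi_\perp$, expanding each squared norm, and noting that the cross terms $2 v_\parallel^T \xi_\parallel$ and $2 v_\perp^T \xi_\perp$ vanish in conditional expectation because $\mathbb{E}\!\left[\xi_t \mid v\right] = 0$. Combining this with $\|v_\parallel\|^2 \leq \|v\|^2$ and the two noise-projection bounds just derived immediately yields the remaining two inequalities.

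There is no genuine obstacle; the calculation is essentially routine. The only substantive point is that the $d/n$ versus $1 - d/n$ split of noise energy relies explicitly on $\mathrm{Cov}(\xi_t)$ being a scalar multiple of $I_n$ (isotropy of the noise), and the only care needed is bookkeeping about the conditioning, so that $U_t$ can be treated as deterministic relative to $\xi_t$ when invoking the trace identity.
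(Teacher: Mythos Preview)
Your proposal is correct and follows essentially the same route as the paper: the paper obtains the first two bounds by noting that $\|\xi_\parallel\|^2$ and $\|\xi_\perp\|^2$ are (scaled) $\chi^2$ variables with $d$ and $n-d$ degrees of freedom, which is exactly your trace-of-projection computation phrased differently, and it handles $\|p\|^2$ and $\|r\|^2$ by showing the cross terms $\mathbb{E}[\xi_\parallel^T v_\parallel]$ and $\mathbb{E}[\xi_\perp^T v_\perp]$ vanish, just as you do. Your remark that the $d/n$ versus $1-d/n$ split requires $\mathrm{Cov}(\xi_t)\propto I_n$ is a useful observation that the paper leaves implicit.
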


\begin{lemma}
Letting $\Delta_1 = v_{\perp}^T\xi_{\perp} + w_2^T(\bar U^TU_t)^{-1}\bar U^Tr$ and $\Delta_2 = \xi_{\parallel}^T(I - \bar U\bar U^T)(v_{\parallel} - \xi_{\perp})$, we have 
	\begin{equation}
		\mathbb{E}\left[\Delta_1 \big\lvert U\right] = 0 \qquad \qquad \mathbb{E}\left[\Delta_2 \big\lvert U\right] = 0    \nn 
	\end{equation}
	\label{lem:exp_delta}
\end{lemma}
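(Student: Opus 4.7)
The plan is to split each $\Delta_i$ into a \emph{mixed} piece that is linear in $\xi$ and a \emph{quadratic} piece of the form $\xi^{T} A \xi$, then show that each piece has vanishing conditional expectation. The mixed pieces will disappear from $\mathbb{E}[\xi] = 0$ combined with conditional independence of $v$ and $\xi$ given $U$ (as guaranteed by Condition~\ref{cond:fullnoise}); the quadratic pieces will disappear because the relevant matrix traces are zero.

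For $\Delta_1$, I would first rewrite
\begin{equation}
\Delta_1 \;=\; v^{T}(I - UU^{T})\xi \;+\; \xi^{T} U(\bar U^{T} U)^{-1}\bar U^{T}(v_\perp + \xi_\perp)\;, \nonumber
\end{equation}
using $v_\perp = (I-UU^{T})v$, $\xi_\perp = (I-UU^{T})\xi$, and $w_2 = U^{T}\xi$. The first term and the $v_\perp$-part of the second term are each of the form $v^{T} M \xi$ with $M$ a function of $U$ only; conditioning on $U$ and using $\mathbb{E}[\xi\mid U, v] = 0$ together with the independence of $v$ and $\xi$ gives expectation zero. The only remaining piece is $\xi^{T} A\xi$ with $A = U(\bar U^{T} U)^{-1}\bar U^{T}(I - UU^{T})$. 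Since $\xi\mid v$ is isotropic Gaussian, $\mathbb{E}[\xi^{T} A\xi \mid U, v] \propto \operatorname{tr}(A)$, and a cyclic shift yields
\begin{equation}
\operatorname{tr}(A) \;=\; \operatorname{tr}\bigl((\bar U^{T} U)^{-1}\bar U^{T}(I - UU^{T})U\bigr) \;=\; \operatorname{tr}\bigl((\bar U^{T} U)^{-1}\bar U^{T}(U - U)\bigr) \;=\; 0\;, \nonumber
\end{equation}
where I used $U^{T} U = I_d$.

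For $\Delta_2$, expanding $\xi_\parallel = UU^{T}\xi$, $v_\parallel = UU^{T}v$, $\xi_\perp = (I-UU^{T})\xi$ produces
\begin{equation}
\Delta_2 \;=\; \xi^{T} UU^{T}(I-\bar U\bar U^{T}) UU^{T} v \;-\; \xi^{T} UU^{T}(I-\bar U\bar U^{T})(I-UU^{T})\xi\;. \nonumber
\end{equation}
The first term is again bilinear between $v$ and $\xi$, hence has zero conditional expectation. The second term is $\xi^{T} B\xi$ with $B = UU^{T}(I-\bar U\bar U^{T})(I-UU^{T})$, whose trace I would handle by the cyclic property and the key identity $UU^{T}(I-UU^{T}) = UU^{T} - UU^{T}UU^{T} = 0$, giving $\operatorname{tr}(B) = 0$.

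There is no real obstacle here; the proof is essentially a bookkeeping exercise that rests on two facts: conditional independence of $v$ and $\xi$ given $U$ (plus $\mathbb{E}\xi = 0$) for the bilinear cross-terms, and the projection identity $UU^{T}(I-UU^{T}) = 0$ (equivalently $U^{T} U = I_d$) for the two quadratic-in-$\xi$ traces. The mildest care is needed in justifying the isotropic-covariance computation $\mathbb{E}[\xi^{T} A\xi] = \sigma_0^2\operatorname{tr}(A)$, but this follows directly from Condition~\ref{cond:fullnoise}, which states that the entries of $\xi$ are independent normal random variables (so $\operatorname{Cov}(\xi\mid v)$ is a multiple of the identity).
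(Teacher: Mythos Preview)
Your proposal is correct and follows essentially the same approach as the paper: both decompose each $\Delta_i$ into terms that are linear in $\xi$ (killed by $\mathbb{E}[\xi\mid U,v]=0$) and a quadratic form $\xi^{T}A\xi$ whose trace vanishes via the projection identity $(I-UU^{T})U=0$. Your write-up is in fact cleaner than the paper's, which contains minor dimensional typos in the $\Delta_2$ computation; the underlying argument is identical.
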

\begin{proof}
	The proof can be found in Appendix \ref{sec:proof_of_concentration_results}. 
\end{proof}

Now we are ready to prove Theorem \ref{thm:detratio_exp} and Theorem \ref{thm:eps_diff_exp}. 
\paragraph{Proof of Theorem \ref{thm:detratio_exp}}{
\begin{proof}
Given Lemma \ref{lem:det_ratio_exp} and that $1 - \alpha = \frac{\|v_{\perp}\|^2}{\|r\|^2}$,  we have 
	\begin{align}
		\mathbb{E}\left[(1 - \alpha)^2\frac{\|r\|^2}{\|p\|^2}\bigg\lvert U\right] = \mathbb{E}\left[\frac{\|v_{\perp}\|^2}{\|r\|^2}\frac{\|v_{\perp}\|^2}{\|p\|^2}\bigg\lvert U\right] &\overset{\vartheta_1}\geq \mathbb{E}_{v}\left[\frac{\|v_{\perp}\|^2}{\mathbb{E}\left(\|r\|^2 \big\lvert v\right)}\frac{\|v_{\perp}\|^2}{\mathbb{E}\left(\|p\|^2\big\lvert v\right)}\bigg\lvert U\right] \nn \\
		&\overset{\vartheta_2}\geq  \frac{1}{1 + \frac{d}{n}\sigma^2}\mathbb{E}_{v}\left[\frac{\|v_{\perp}\|^2 / \|v\|^2}{\|v_{\perp}\|^2 / \|v\|^2 + (1 - d/n)\sigma^2}\frac{\|v_{\perp}\|^2}{\|v\|^2}\bigg\lvert U\right] \nn \\
		&\overset{\vartheta_3}\geq \frac{1}{1 + \frac{d}{n}\sigma^2}\mathbb{E}\left[\frac{\|v_{\perp}\|^2}{\|v\|^2}\bigg\lvert U\right] \left(1 - \frac{\sigma^2}{\mathbb{E}\left[\frac{\|v_{\perp}\|^2}{\|v\|^2}\big\lvert U\right] + \sigma^2}\right) \nn \\
		&\overset{\vartheta_4}\geq \frac{1}{\left(1 + \frac{d}{n}\sigma^2\right)}\frac{1 - \zeta_t}{d}\left(1 - \frac{\sigma^2}{(1 - \zeta_t)/d + \sigma^2}\right) \nn
	\end{align}
	where $\vartheta_1$ follows by the fact that $\xi_{\perp}$ and $\xi_{\parallel}$ are independent of each other,  and for each iteration $t$ given $U$ and $v$, $v_{\parallel}$ and $v_{\perp}$ are fixed, then applying Jensen's Inequality yields the results; $\vartheta_2$ follows from Lemma \ref{lem:rp_conc}; again we apply Jensen's Inequality in terms of $\|v_{\perp}\|^2 / \|v\|^2$ for $\vartheta_3$; and $\vartheta_4$ follows from Lemma \ref{lem:vconc}.
\end{proof}
}

\paragraph{Proof of Theorem \ref{thm:eps_diff_exp}}{\begin{proof}
According to Lemma \ref{lem:epsdiff_expr} we have the following,
	\begin{align}
		\mathbb{E}\left[\epsilon_t - \epsilon_{t + 1} \big\lvert U\right] &\overset{\vartheta_1}\geq \mathbb{E}_{v}\left[\mathbb{E}\left(\frac{\|(I - \bar U\bar U^T)p\|^2}{\|p\|^2} - \frac{\|(I - \bar U\bar U^T)\left(\xi - \alpha r\right)\|^2}{\|p\|^2} \bigg\lvert v,U\right)\right] \nn \\
		&\overset{\vartheta_2}= \mathbb{E}_v\left[\mathbb{E}\left(\frac{\|(I - \bar U\bar U^T)v_{\parallel}\|^2 - \|(I - \bar U\bar U^T)(\xi_{\perp} - \alpha r)\|^2 + 2(1 - \alpha)\Delta_1}{\|p\|^2} \bigg\lvert v,U\right)\right] \nn \\ 
		&\overset{\vartheta_3}\geq \mathbb{E}_v\left[\frac{\|(I - \bar U\bar U^T)v_{\parallel}\|^2 - \mathbb{E}\left[\|\xi_{\perp} - \alpha r\|^2 \big\lvert v, U\right]}{\mathbb{E}\left[\|p\|^2 \big\lvert v, U\right]}\right] \nn \\
		&\overset{\vartheta_4}\geq \frac{1}{1 + \frac{d}{n}\sigma^2} \mathbb{E}_{v}\left[\frac{\|(I - \bar U\bar U^T)v_{\parallel}\|^2}{\|v\|^2} - \frac{\|v_{\perp}\|^2}{\|v\|^2}\frac{(1 - d/n)\sigma^2}{\|v_{\perp}\|^2/\|v\|^2 + (1 - d/n)\sigma^2}\bigg\lvert U\right] \nonumber \\
		&\overset{\vartheta_5}\geq \frac{1}{1 + \frac{d}{n}\sigma^2} \left(\mathbb{E}_{v}\left[\frac{\|(I - \bar U\bar U^T)v_{\parallel}\|^2}{\|v\|^2} \bigg\lvert U\right] - \mathbb{E}\left[\frac{\|v_{\perp}\|^2}{\|v\|^2}\bigg\lvert U\right] \frac{(1 - d/n)\sigma^2}{\mathbb{E}\left[\|v_{\perp}\|^2/\|v\|^2 \big\lvert U\right] + (1 - d/n)\sigma^2}\right) \nonumber \\
		&\overset{\vartheta_6}\geq \frac{1}{1 + \frac{d}{n}\sigma^2}\left(\frac{\cos^2\phi_d}{d}\epsilon_t - \left(1 - \frac{d}{n}\right)\sigma^2\frac{\epsilon_t / d}{\epsilon_t / d + (1 - d/n)\sigma^2}\right) \nn 
	\end{align}
	where $\vartheta_1$ holds since $1 - \|\bar U\bar U^T p\|^2 / \|p\|^2 = \|(I - \bar U\bar U^T)p\|^2 / \|p\|^2$ and $\|p + (1 - \alpha) r\|^2 = \|p\|^2 + (1 - \alpha)^2\|r\|^2 \geq \|p\|^2$; $\vartheta_2$ follows by 
	\begin{equation}
	 	(1 - \alpha)\Delta_1 = \xi_{\parallel}^T(I - \bar U\bar U^T)\left(v_{\parallel} - \xi_{\perp} + \alpha r\right) = (1 - \alpha)\xi_{\parallel}^T(I - \bar U\bar U^T)(v_{\parallel} - \xi_{\perp}) \nn
	 \end{equation} 
	 $\vartheta_3$ follows by Lemma \ref{lem:exp_delta} and the fact given $v$, $p$ and $\xi_{\perp} - \alpha r$ are independent, then applying Jensen's inequality; and $\vartheta_4$ holds due to Lemma \ref{lem:rp_conc} and the following:
	  $$\alpha = 1 - \frac{\|v_{\perp}\|^2}{\|r\|^2} = \frac{\|\xi_{\perp}\|^2 + 2\xi_{\perp}^Tv_{\perp}}{\|r\|^2}$$
	  which implies
	  \begin{align}
	  	\mathbb{E}\left[\frac{\|\xi_{\perp} - \alpha r\|^2}{\|v\|^2}\bigg\lvert v, U\right] = \mathbb{E}\left[\frac{\|\xi_{\perp}\|^2 - \alpha\left(2\xi_{\perp}^Tr - \alpha\|r\|^2\right)}{\|v\|^2}\bigg\lvert v, U\right] & = \mathbb{E}\left[\frac{(1 - \alpha)\|\xi_{\perp}\|^2}{\|v\|^2}\bigg\lvert v, U\right] \nn \\
	  	&\overset{\vartheta_7}\leq \frac{\|v_{\perp}\|^2}{\|v\|^2} \frac{\mathbb{E}\left[\|\xi_{\perp}\|^2\big\lvert v, U\right]}{\mathbb{E}\left[\|r\|^2\big\lvert v, U\right]} \nn \\
	  	&=  \frac{\|v_{\perp}\|^2}{\|v\|^2}\frac{(1 - d/n)\sigma^2\|v\|^2}{\|v_{\perp}\|^2 + (1 - d/n)\sigma^2\|v\|^2} \nn 
	  \end{align}
	  where in $\vartheta_7$ we used Jensen's Inequality in terms of $\xi_{\perp}$. 

	  Finally, for $\vartheta_5$ we again apply Jensen's Inequality and then use Lemma \ref{lem:vconc} for $\vartheta_6$ to complete the proof.
\end{proof}}

\section{Proof of Supporting Theory} 
\label{sec:proof_of_supporting_theory}

\paragraph{Proof of Lemma \ref{lem:epsdiff_expr}}{\begin{proof}
  We first rotate $U_t$ and $U_{t + 1}$ via an orthogonal matrix defined as 
\begin{equation}
W_t := \left[\left. \frac{w}{\|w\|}  \, \right| \, Z \right] \nn
\end{equation}
where $Z$ is an $d \times (d-1)$ matrix with orthonormal columns whose columns span $N(w^T)$, where $N(\cdot)$ denotes the nullspace and so this gives the orthogonal complement of $w$. Note that $R(U_t)$ is unchanged by multiplying $U_t$ with $W_t$, i.e, $R(U_t) = R(U_t W)$. 
This allows us to equivalently write the update equation as follows, 
\begin{eqnarray}
U_{t + 1} W = U_t W + \left[ \frac{y}{\| y \|} - \frac{p}{\| p \|} \right] \frac{w^T}{\|w\|} W = U_t W + \left[ \frac{y}{\| y \|} - \frac{p}{\| p \|} \right] \left[ \begin{matrix} 1 & 0 & 0 & \ldots & 0 \end{matrix} \right] \nn
\end{eqnarray}
Since Frobenius norm is invariant under orthogonal transformation, it directly implies the following: 
 \begin{equation}
    \epsilon_{t} - \epsilon_{t + 1} = \left\|\bar U^T U_{t + 1}W\right\|_F^2 - \left\|\bar U^T U_t W\right\|_F^2 = \frac{\left\|\bar U\bar U^Ty\right\|^2}{\left\|y\right\|^2} - \frac{\|\bar U\bar U^T p\|^2}{\|p\|^2}  \label{eq:eps_diff}
  \end{equation}
Note that $I - \bar U\bar U^T$ is the orthogonal complement of $\bar U\bar U^T$, and $(I - \bar U\bar U^T)v = 0$, hence we have the following, 
\begin{align}
	\frac{\|\bar U\bar U^T y\|^2}{\|y\|^2} = \frac{\|\bar U\bar U^T(v + \xi - \alpha r)\|^2}{\|v + (\xi - \alpha r)\|^2} &= 1 - \frac{\|(I - \bar U\bar U^T)(\xi - \alpha r)\|^2}{\|v + \xi - \alpha r\|^2} \nn
\end{align}
\end{proof}}
\begin{remark}[Proof of the Second Claim in Lemma \ref{lem:mono}]
	Note that when there is no noise, $y = v \in R(\bar U)$ and $p = v_{\parallel}$, therefore (\ref{eq:eps_diff}) implies the following
	\begin{equation}
	  	\frac{\|\bar U\bar U^T y\|^2}{\|y\|^2} - \frac{\|\bar U\bar U^T p\|^2}{\|p\|^2} = 1 - \frac{\|\bar U\bar U^T v_{\parallel}\|^2}{\|v_{\parallel}\|^2}
	  \end{equation}  
    this completes the proof of the second claim of Lemma \ref{lem:mono}. 
\end{remark}

\paragraph{Proof of Lemma \ref{lem:det_ratio_exp}}{
\begin{proof}
Since $\det\left(A + a b^{T}\right) = \det(A)\left(1 + b^T A^{-1} a\right)$, we obtain
\begin{align}
    \det \bar U^T U_{t + 1} &= \det \left(\bar U^T U_t + \left(\frac{\bar U^T y}{\|y\|} - \frac{\bar U^T p}{\|p\|}\right)\frac{w^T}{\|w\|}\right) \nn \\
    &= \det (\bar U^T U_t ) \cdot \left(1 + \frac{w^T(\bar U^T U)^{-1}\bar U^Ty}{\|p\| \|p + (1 - \alpha)r\|} - \frac{w^T(\bar U^TU)^{-1}\bar U^T p}{\|p\|^2}\right) \label{eq:det_ratio}
  \end{align}
Note that 
\begin{align}
	& w^T(\bar U^TU)^{-1}p = w^T(\bar U^TU)^{-1}\bar U^T U w = \|w\|^2 \nn\\
	& w_1^T(\bar U^TU)^{-1}r = s^T\bar U^TU(\bar U^TU)^{-1}r = v^Tr = \|v_\perp\|^2 + v_{\perp} \xi_{\perp} \nn
\end{align}
together with the fact $\|p + (1 - \alpha)r\|^2 = \|p\|^2 + (1 - \alpha)^2\|r\|^2$ yields
\begin{align}
 	\mathbb{E}\left[\frac{\zeta_{t + 1}}{\zeta_t}\bigg\lvert U\right] &= \mathbb{E}\left[\left(\frac{\|p\|^2 + \left(1 - \alpha\right)\left(\|v_{\perp}\|^2 + v_{\perp}^T\xi_{\perp} + w_2^T(\bar U^TU_t)^{-1}\bar U^Tr\right)}{\left\|p + \left(1 - \alpha\right)r\right\|\|p\|}\right)^2\bigg\lvert U\right] \nn \\
	&\overset{\vartheta_1}= \mathbb{E}\left[\left(\frac{\|p\|^2 + \left(1 - \alpha\right)^2\|r\|^2 + (1 - \alpha)\Delta_2}{\left\|p + \left(1 - \alpha\right)r\right\|\|p\|}\right)^2\bigg\lvert U\right] \nn \\
	&\overset{\vartheta_2}\geq \mathbb{E}\left[\frac{\|p\|^2 + (1 - \alpha)^2\|r\|^2}{\|p\|^2} + 2(1 - \alpha)\frac{\Delta_2}{\|p\|^2} \bigg\lvert U\right] \nn \\
 	&\overset{\vartheta_3}= \mathbb{E}\left[1 + (1 - \alpha)^2\frac{\|r\|^2}{\|p\|^2}\bigg\lvert U\right]
 \end{align} 
where $\vartheta_1$ follows by $\|v_{\perp}\|^2 =(1 - \alpha)\|r\|^2$ and $\Delta_2 = v_{\perp}^T\xi_{\perp} + w_2^T(\bar U^TU_t)^{-1}\bar U^Tr$; $\vartheta_2$ follows from $p \perp r$, this implies $\|p + (1 - \alpha)r\|^2 = \|p\|^2 + (1 - \alpha)^2\|r\|^2$; and $\vartheta_3$ holds since according to Lemma \ref{lem:exp_delta} and our assumptions that $\|v\|^2 < \infty$ this implies that $\|v_{\perp}\|^2$ and with the Gaussian noise both $\|p\|^2$ and $\|r\|^2$ are bounded with probability equals $1$. Therefore, we have $\mathbb{E}\left[2(1 - \alpha)\Delta_1/\|p\|^2 \big\lvert U\right] = 0$. 
\end{proof} 
}

\begin{remark}[Proof of the First Claim in Lemma \ref{lem:mono}]
	For the noise-free case, $y = v$ and $w = U^T v = ^T\bar U s_t$, $w^T(\bar U^TU)^{-1}\bar U^Ty = s^T\bar U^T U(\bar U^TU)^{-1}\bar U^T v = \|v\|^2$. Hence (\ref{eq:det_ratio}) yields
	\begin{equation}
		\frac{\zeta_{t + 1}}{\zeta_t} = \frac{\|v\|^2}{\|v_{\parallel}\|^2} = 1 + \frac{\|v_{\perp}\|^2}{\|v_{\parallel}\|^2} \nonumber
	\end{equation}
	this completes the proof of the first claim of Lemma \ref{lem:mono}.
\end{remark}

\section{} 
\label{sec:proof_of_concentration_results}
\paragraph{Proof of Lemma \ref{lem:expisovec}}{\begin{proof}
This proof is identical to that of [Lemma 2.12, \cite{balzano2014local}], but we note that their assumption that $x$ be Gaussian is not necessary.
	\begin{eqnarray}
		\mathbb{E}\left[\frac{x^T Q x}{x^T x}\right] = \sum_{i\neq j}\mathbb{E}\left[\frac{x_i x_j}{\|x\|_2^2}\right]Q_{ij} + \sum_{i = 1}^{d}\mathbb{E}\left[\frac{x_i^2}{\|x\|_2^2}\right]Q_{ii} \overset{\xi_1}= \sum_{i = 1}^{d}\mathbb{E}\left[\frac{x_i^2}{\|x\|_2^2}\right] \overset{\xi_2}= \frac{1}{d}\trace(Q) \nonumber
	\end{eqnarray}
	where $\xi_1$ follows from the fact that $\mathbb{E}\left[\frac{x_ix_j}{\|x\|_2^2}\right] = 0$ and $\xi_2$ follows by 
	\begin{equation}
		1 = \mathbb{E}\left[\frac{\sum_{i = 1}^{d}x_i^2}{\sum_{i = 1}^{d}x_i^2}\right] = \sum_{i = 1}^{d}\mathbb{E}\left[\frac{x_i^2}{\sum_{i = 1}^{d}x_i^2}\right] = d \mathbb{E}\left[\frac{x_i^2}{\sum_{i = 1}^{d}x_i^2}\right], \quad i = 1, \dots,d \nonumber
	\end{equation}
	since each $x_i$ is identically distributed.
\end{proof}}

\paragraph{Proof of Lemma \ref{lem:zeta_eps_relate}}{
	\begin{proof}
		The proof of the first claim is similar to that of [Lemma 16, \cite{de2014global}]; we briefly sketch it here. Let $f_1(X) = d - 1 - \sum_{i = 1}^{d}X_i + \Pi_{i = 1}^{d}X_i$, then 
		$\frac{\partial f_1(X)}{\partial X_i} = -1 + \Pi_{j \neq i} X_i \leq 0$. That is, $f_1(X)$ is a decreasing function of each component. Hence
		\begin{equation}
			f_1(X) \geq f(\textbf{1}) = 0 \nn 
		\end{equation}
		For the second claim, let $f_2(X) = 2\left(1 - \Pi_{i = 1}^{d}X_i\right) - d + \sum_{i = 1}^{d}X_i$, then given $\Pi_{i = 1}^{d} X_i \geq 1/2$, we have 
	 \begin{equation}
	 	\frac{\partial f_2(X)}{\partial X_i} = -2\Pi_{j \neq i} X_i + 1 \leq 0 \Rightarrow f_2(X) \geq f_2(\textbf{1}) = 0 \nn 
	 \end{equation}
	\end{proof}
}

\paragraph{Proof of Lemma \ref{lem:rp_conc}}{\begin{proof}
	Note that $\|\xi\|^2$, $\|\xi_{\perp}\|^2$ and $\|\xi_{\parallel}\|^2$ are all $\chi^2$ distributed random variables with degrees $n$, $n - d$ and $d$. This implies the first two parts. 

	And for the last two inequalities, we have 
	\begin{align}
		&\mathbb{E}\left[\xi_{\parallel}^T v_{\parallel}\right] = \mathbb{E}_{v}\left[\mathbb{E}\left[\xi^TUU^Tv \big\lvert v\right]\right] = 0  \nn \\
		&\mathbb{E}\left[\xi_{\perp}^T v_{\perp}\right] = \mathbb{E}_{v}\left[\xi^T(I - UU^T)v \big \lvert v\right] = 0  \nn
	\end{align}
	which implies the following:
	\begin{align}
		&\mathbb{E}\left[\|r\|^2 \big\lvert v\right] = \mathbb{E}\left[\|v_{\perp}\|^2\lvert v\right] + \mathbb{E}\left[\|\xi_{\perp}\|^2\lvert v\right] \leq \mathbb{E}\left[\|v_{\perp}\|^2\lvert v\right] + \left(1 - \frac{d}{n}\right)\sigma^2\|v\|^2 \nn \\
		&\mathbb{E}\left[\|p\|^2 \big\lvert v\right] = \mathbb{E}\left[\|v_{\parallel}\|^2\lvert v\right] + \mathbb{E}\left[\|\xi_{\parallel}\|^2\lvert v\right] \leq \mathbb{E}\left[\|v_{\parallel}\|^2\lvert v\right] + \frac{d}{n}\sigma^2\|v\|^2 \leq \left(1 + \frac{d}{n}\sigma^2\right)\|v\|^2   \nn
	\end{align}
\end{proof}}

\paragraph{Proof of Lemma \ref{lem:exp_delta}}{\begin{proof}
	\begin{align}
		\mathbb{E}\left[\Delta_1\big\lvert U\right] &= \mathbb{E}_{v}\left[\mathbb{E}\left[v^T(I - UU^T)\xi_{\perp}\big\lvert U, v\right]\right] + \mathbb{E}_{v}\left[\mathbb{E}\left[\xi^TU(\bar U^TU)^{-1}\bar U^T v_{\perp}\big\lvert U, v\right]\right] \nn \\
		& \quad + \mathbb{E}\left[\xi^TU(\bar U^TU)^{-1}\bar U^T (I - UU^T)\xi\big\lvert U\right]\nn \\
		&\overset{\vartheta_1}= \trace\left(\mathbb{E}\left[\xi^TU(\bar U^TU)^{-1}\bar U^T (I - UU^T)\xi\big\lvert U\right]\right) \nn \\
		&= \trace\left((\bar U^TU)^{-1}\bar U^T (I - UU^T)\mathbb{E}\left[\xi\xi^T\right]U\right) \nn \\
		&\overset{\vartheta_2}= 0 \nn
	\end{align}
	where $\vartheta_1$ holds since the following, 
	\begin{align}
	 	& \mathbb{E}\left[v^T(I - UU^T)\xi_{\perp}\big\lvert U, v\right]] = \mathbb{E}\left[v^T(I - UU^T)\xi \big\lvert U, v\right] = 0 \nn \\
	 	& \mathbb{E}\left[\xi^TU(\bar U^TU)^{-1}\bar U^T v_{\perp}\big\lvert U, v\right] = \mathbb{E}\left[\xi^TU(\bar U^TU)^{-1}\bar U^T (I - UU^T)\big\lvert U, v\right] = 0
	 \end{align}
	and $\vartheta_2$ holds since $\mathbb{E}\left[\xi\xi^T\right] = \mathbb{I}_{n}$. 

	The second argument following by similar argument 
	\begin{align}
		\mathbb{E}[\Delta_2\big \lvert U] &= \mathbb{E}_{v}\left[\mathbb{E}\left[\xi^TU(I - \bar U\bar U^T)v_{\parallel}\big\lvert U, v\right]\right] + \trace\left((I - \bar U\bar U^T)(I - UU^T)\mathbb{E}\left[\xi\xi^T\right]U\right) = 0  \nn 
	\end{align}
\end{proof}}

\end{document}